\newtheorem{lemma}{Lemma}[section]
\newtheorem{theorem}[lemma]{Theorem}
\theoremstyle{definition}
\newtheorem{definition}[lemma]{Definition}
\theoremstyle{definition}
\newtheorem{remark}[lemma]{Remark}
\theoremstyle{definition}
\title {Lattice pseudo-effect algebras as  double residuated structures}
\author{David J. Foulis, Sylvia Pulmannov\'a and Elena Vincekov\'a}\thanks{The second and third author were supported
Center of Excellence SAS -~Quantum Technologies;
ERDF OP R\&D Projects CE QUTE ITMS 26240120009,  and meta-QUTE ITMS 26240120022;
the grant VEGA No. 2/0032/09 SAV;
the Slovak Research and Development Agency under
the contract LPP-0199-07}
\begin{document}
\address{Department of Mathematics and Statistics, University of Massachusetts, Amherst, MA, USA; Mathematical Institute, Slovak Academy of Sciences, \v
Stef\'anikova 49, 814 73 Bratislava, Slovakia }
\email{foulis@math.umass.edu, pulmann@mat.savba.sk, vincek@mat.savba.sk}
\keywords{pseudo-effect algebra, negation, implication, conjunction, residuation, double CI-poset, pseudo Sasaki algebra, conditional double CI-poset}
\subjclass{Primary 81P10, 08A55, Secondary 03G12}
\maketitle
\markboth{Foulis, D., Pulmannov\'a, S., Vincekov\'a, E.}{LPEAs as double residuated structures}
\date{}

%\noindent{keywords: pseudo-effect algebra, negation, implication, conjunction, residuation, double CI-poset, pseudo Sasaki algebra, conditional double CI-poset}

%\noindent{\it 2000 Mathematics Subject Classification}: Primary 81P10, 08A55, Secondary 03G12

\begin{abstract} Pseudo-effect algebras are partial algebraic structures, that were introduced as a non-commutative generalization of effect algebras.  In the present paper, lattice ordered pseudo-effect algebras are considered as possible algebraic non-commutative analogs of non-commutative non-standard reasoning. To this aim, the interplay among conjunction, implication and negation connectives is studied. It turns out that in the non-commutative reasoning, all these connectives are doubled. In particular, there are two negations and two pairs consisting of conjunction and implication, related by residuation laws. The main result of the paper is a characterization of lattice pseudo-effect algebras in terms of so-called pseudo Sasaki algebras.  We also show that all pseudo-effect algebras can be characterized in terms of certain partially defined double residuated structures.

\end{abstract}
\maketitle

\section{Introduction}

An effect algebra is a partial algebraic structure, originally introduced as an algebraic base for unsharp quantum measurements. Recently, in \cite{FoPu},  lattice effect algebras (LEAs) have been studied as possible algebraic models for the semantics of non-standard symbolic logic, just as MV-algebras (special kind of LEAs) are algebraic models for Lukasiewicz many-valued logics, and orthomodular lattices (also a special kind of LEAs) are algebraic models for sharp quantum logical calculi. In particular, the interplay among conjunction, implication, and negation connectives on LEAs  has been studied, where the conjunction and implication connectives are related by a residuation law. As a main result, a characterization of LEAs has been obtained in terms of so-called Sasaki algebras. A Sasaki algebra is a structure $(P;\leq;0,1, ^{\perp}, .)$ consisting of a bounded poset equipped with a unary operation, the involution $a\mapsto a^\perp$ and a binary operation $(p,q)\mapsto p.q$ called the Sasaki product, which plays the role of a conjunction connective.

In \cite{DvVe1, DvVe2} pseudo-effect algebras were introduced as a non-commutative generalization of effect algebras. In the present paper, we extend the study of logical aspects to lattice pseudo-effect algebras (LPEAs). In \cite{PuSas}, it was shown that there are two analogues of the Sasaki product in  LPEAs: the "right" and the "left".  It turns out that all connectives are doubled: we need to consider two conjunctions, two implications and two negations, and we also have two residuation laws. As a  main result, we obtain a characterization of lattice pseudo-effect algebras in terms of so-called pseudo Sasaki algebras. We also obtain  characterizations of some special subclasses of LPEAs, among them pseudo MV algebras, in terms of additional identities for logical connectives.

In \cite{ChaHa},  effect algebras are characterized as so-called conditionally residuated structures,
in which the residuated operations are partially defined. The latter characterization is extended to a subclass of pseudo-effect algebras, so-called good pseudo-effect algebras. In the end of the present paper, we characterize all pseudo-effect algebras in terms of some double residuated structures with partially defined operations.

\section{Conjunction, implication, and negation connectives}

In \cite{FoPu}, the notion of a conjunction/implication poset (CI-poset) is introduced as a system $(P;\leq; 0,1,.,\rightarrow)$ consisting of a bounded poset $(P;\leq; 0,1)$ equipped with two binary compositions $.$ and $\rightarrow$ called the conjunction connective and the implication connective, respectively, satisfying the \emph{unity law}: $1.p=p.1=p$, and the \emph{residuation law}: $p.q\leq r \, \Leftrightarrow \, q\leq (p\rightarrow    r)$.
Owing to residuation law, the binary mappings $\to$ and $.$ determine each other uniquely.
Several "logical laws" are then considered, some of them are satisfied by every CI-poset, the others might or might not be satisfied.

Now we will consider a structure $(P;\leq,0,1,\circ,\rightarrow,\ast,\leadsto)$ satisfying the following two axioms:
\begin{itemize}
\item[(1)] $1\circ a=a\circ 1=a=1\ast a=a\ast 1$ (unity)
\item[(2)] $a\circ b\leq c \,\Leftrightarrow\, b\leq a\leadsto c$ and $a\ast b\leq c \,\Leftrightarrow\, b\leq a\rightarrow c$ (residuation)
\end{itemize}

and with complements defined by
$a^-:=a\rightarrow 0\;\;\;\;\;\;\;\; a^{\sim}:=a\leadsto 0$

\begin{definition}
The structure $(P,;\leq,\circ,\ast,\rightarrow,\leadsto,0,1)$ satisfying axioms (1) and (2) will be called a \emph{double CI-poset}. If a double CI-poset is a lattice, we call it a \emph{double CI-lattice}.
\end{definition}

In what follows, $(P;\leq,\circ,\ast,\rightarrow,\leadsto,0,1)$ is a double CI-poset.

\begin{theorem}
$a,b\in P$, $a\leq b \,\Leftrightarrow\, a\rightarrow b=1$ ($\,\Leftrightarrow\, a\leadsto b=1$) (deduction law).
\end{theorem}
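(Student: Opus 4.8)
The plan is to reduce both biconditionals to the unity and residuation axioms together with the trivial observation that, in a bounded poset, an element equals the top exactly when the top lies below it, i.e. $x=1 \Leftrightarrow 1\leq x$. So I would first replace the goal ``$a\rightarrow b=1$'' by ``$1\leq a\rightarrow b$'', and likewise ``$a\leadsto b=1$'' by ``$1\leq a\leadsto b$''.

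For the pair $(\ast,\rightarrow)$, I would apply the second residuation equivalence in axiom~(2) with the free variable $b$ instantiated to $1$: this gives $a\ast 1\leq b \Leftrightarrow 1\leq a\rightarrow b$. By the unity law $a\ast 1=a$, so the left-hand side is just $a\leq b$, and combining with the top-element observation we obtain $a\leq b \Leftrightarrow a\rightarrow b=1$. For the pair $(\circ,\leadsto)$ the argument is symmetric: the first residuation equivalence with its free variable set to $1$ yields $a\circ 1\leq b \Leftrightarrow 1\leq a\leadsto b$, and $a\circ 1=a$ by unity, so $a\leq b \Leftrightarrow a\leadsto b=1$.

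I would present the two directions of each biconditional explicitly only if the referee-style exposition seems to demand it; as written above, each is a single chain of equivalences, so no case split is really needed. There is essentially no obstacle here: the only point requiring a word of justification is that $1$ is the greatest element of $(P;\leq,0,1)$, which is part of the definition of a bounded poset, so $1\leq x$ forces $x=1$. I would state that once at the start and then let the two symmetric computations run.
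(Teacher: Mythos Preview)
Your proposal is correct and follows essentially the same route as the paper: rewrite $a\leq b$ as $a\circ 1\leq b$ (resp.\ $a\ast 1\leq b$) via the unity law, apply residuation to get $1\leq a\leadsto b$ (resp.\ $1\leq a\rightarrow b$), and use that $1$ is the greatest element. The paper's proof is just a terser rendering of the same chain of equivalences.
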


\begin{proof}
$a\leq b \,\Leftrightarrow\, a\circ 1\leq b \;\,(a\ast 1\leq b)$ which is by residuation law equivalent with $1\leq a\leadsto b \;\,(1\leq a\rightarrow b)$.
\end{proof}

\begin{lemma}\label{le:2.3}
Let $a,b$ be elements of $P$. Then
\begin{itemize}
\item[(i)] $a\rightarrow 1=0\rightarrow a=a\rightarrow a=1$; $a\leadsto 1=0\leadsto a=a\leadsto a=1$
\item[(ii)] $a^{\sim}=1 \,\Leftrightarrow\, a=0\,\Leftrightarrow\, a^-=1$ and $0^{{\sim}-}=0^{-{\sim}}=0$
\item[(iii)] $a\circ b=0 \,\Leftrightarrow\, b\leq a^{\sim}$ and $a\ast b=0 \,\Leftrightarrow\, b\leq a^-$
\item[(iv)] $a^-\circ a=0 \,\Leftrightarrow\, a^{-{\sim}}\geq a$ and $a^{\sim}\ast a=0 \,\Leftrightarrow\, a^{{\sim}-}\geq a$
\item[(v)] $b\leq a\leadsto (a\circ b)$ and $b\leq a\rightarrow (a\ast b)$
\item[(vi)] $b=1\rightarrow b=1\leadsto b$
\item[(vii)] $a\circ b\leq a$ and $a\ast b\leq a$
\item[(viii)] $a\circ 0=0\circ a=0=a\ast 0=0\ast a$
\item[(ix)] $a\circ b=1 \,\Leftrightarrow\, a=b=1 \,\Leftrightarrow\, a\ast b=1$
\end{itemize}
\end{lemma}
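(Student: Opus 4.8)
The plan is to derive each of the nine items directly from axioms (1) and (2) and the deduction law just proved, choosing an order in which the later items may invoke the earlier ones. The single recurring device is to feed an instance of reflexivity, $x\leq x$, or a bound involving $0$ or $1$, into the residuation equivalences of axiom (2); apart from this everything is bookkeeping, the main chore being to keep the two triples $(\circ,\leadsto,{}^{\sim})$ and $(\ast,\rightarrow,{}^{-})$ from getting crossed.

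I would begin with (i): $a\rightarrow 1=1$, $0\rightarrow a=1$ and $a\rightarrow a=1$ are just the deduction law applied to $a\leq 1$, $0\leq a$ and $a\leq a$, and symmetrically for $\leadsto$. Then (ii) is immediate: $a^{\sim}=a\leadsto 0=1\Leftrightarrow a\leq 0\Leftrightarrow a=0$ by the deduction law, and dually for $a^-$; for the last clause, (i) gives $0^{\sim}=0\leadsto 0=1$, so $0^{{\sim}-}=1\rightarrow 0$, and since $1\ast c=c$ residuation yields $c\leq 1\rightarrow 0\Leftrightarrow c\leq 0$ for every $c$, whence $1\rightarrow 0=0$ on taking $c=1\rightarrow 0$; symmetrically $1\leadsto 0=0$, giving $0^{{\sim}-}=0^{-{\sim}}=0$. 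Items (iii)--(v) are one-liners from residuation: $a\circ b=0\Leftrightarrow a\circ b\leq 0\Leftrightarrow b\leq a\leadsto 0=a^{\sim}$ and dually; (iv) is (iii) with $a$ replaced by $a^-$, resp.\ $a^{\sim}$; and (v) is residuation applied to $a\circ b\leq a\circ b$, resp.\ $a\ast b\leq a\ast b$.

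For (vi) I would reuse the argument behind $1\rightarrow 0=0$: from $1\ast c=c$ and residuation, $c\leq 1\rightarrow b\Leftrightarrow c\leq b$ for all $c$, so $1\rightarrow b$ and $b$ have the same principal down-set and hence coincide; symmetrically $1\leadsto b=b$ from $1\circ c=c$. For (vii), residuation converts $a\circ b\leq a$ into $b\leq a\leadsto a$, which holds by (i) and $b\leq 1$, and dually for $a\ast b\leq a$. Item (viii) uses that $0\leq a\leadsto c$ always, so $a\circ 0\leq c$ for every $c$ (take $c=0$), while $0\leadsto c=1\geq a$ by (i) gives $0\circ a\leq c$ for every $c$; the $\ast$-versions are symmetric. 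Finally (ix): if $a\circ b=1$ then $1=a\circ b\leq a$ by (vii) forces $a=1$, and then unity gives $b=1\circ b=a\circ b=1$; the converse is $1\circ 1=1$ by unity, and the $\ast$ case is identical.

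There is no genuinely hard step. The one place deserving a second's thought is the cancellation-flavoured fact $1\rightarrow b=b$ (equivalently $1\rightarrow 0=0$), needed in (ii) and (vi): residuation together with the unity law only pins $1\rightarrow b$ down to having the same down-set as $b$, but in a poset that is exactly enough. Everything else is a direct substitution into axiom (2), provided (i) is proved before (ii), the first clause of (ii) before its second, and (vii) before (ix).
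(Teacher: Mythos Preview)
Your proof is correct and follows essentially the same route as the paper: each item is obtained by feeding a trivial inequality ($a\leq 1$, $0\leq a$, $a\leq a$, or reflexivity) into the residuation law or the deduction law, with later items invoking earlier ones exactly as you describe. The only cosmetic differences are in (vi), where the paper gets $b\leq 1\rightarrow b$ from (v) and the reverse inequality from residuation on $1\rightarrow b\leq 1\rightarrow b$, whereas you package both directions as a same-down-set argument, and in (viii), where the paper obtains $0\circ a=0$ from (vii) rather than from (i) plus residuation; both variants are equivalent one-liners.
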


\begin{proof}
(i): As $a\leq 1; 0\leq a$ and $a\leq a$, we  use just deduction law to get the result.\\
(ii): We use definition and deduction law again to get $0^-=0\rightarrow 0=1=0\leadsto 0=0^{\sim}$. On the other hand, if $a^-=1$, then $1\leq a\rightarrow 0$, whence $a=a\ast 1\leq 0$ by residuation, so $a=0$. Also if $a^{\sim}=1$, then $1\leq a\leadsto 0$, whence $a=a\circ 1\leq 0$ and so $a=0$.\\
(iii), (iv) and (v) directly follow from residuation.\\
(vi): When we put $a:=1$ in (v), we obtain $b\leq 1\rightarrow b$ and $b\leq 1\leadsto b$. As $1\rightarrow b\leq 1\rightarrow b$ and $1\leadsto b\leq 1\leadsto b$, from residuation we get $1\rightarrow b=1\ast (1\rightarrow b)\leq b$ and $1\leadsto b=1\circ (1\leadsto b)\leq b$.\\
(vii): We use residuation law and (i) to get $a\circ b\leq a$ iff $b\leq a\leadsto a=1$ and $a\ast b\leq a$ iff $b\leq a\rightarrow a=1$.\\
(viii): $a\circ 0=a\ast 0=0$ is a consequence of residuation law and the remaining is implied by (vii) when $a=0$.\\
(ix): If $a\circ b=1$ or $a\ast b=1$, then by (vii) $a=1$. Now $b=1$ follows from unity law. The converse is obvious.
\end{proof}

\begin{theorem}\label{th:prop}
If $a,b,c\in P$ and $(b_i)_{i\in I}$ is a family of elements of $P$, then
\begin{itemize}
\item[(i)] $a\circ (a\leadsto b)\leq b$ and $a\ast (a\rightarrow b)\leq b$ (modus ponens)
\item[(ii)] $b\leq c\,\Rightarrow\, (a\leadsto b)\leq (a\leadsto c),\; (a\rightarrow b)\leq (a\rightarrow c)$ (right monotone law for the consequent)
\item[(iii)] $a^-\leq a\rightarrow b$ and $a^{\sim}\leq a\leadsto b$ (Duns Scotus' law)
\item[(iv)] if $\bigvee_{i\in I} b_i$ exists in $P$, then $\bigvee_{i\in I}a\circ b_i$ and $\bigvee_{i\in I}a\ast b_i$ exist in $P$ and $a\circ\bigvee_{i\in I} b_i=\bigvee_{i\in I} a\circ b_i$, $a\ast\bigvee_{i\in I} b_i=\bigvee_{i\in I} a\ast b_i$ (preservation of supremum by conjunction)
\item[(v)] if $\bigwedge_{i\in I} b_i$ exists in $P$, then $\bigwedge_{i\in I} (a\rightarrow b_i)$ and $\bigwedge_{i\in I} (a\leadsto b_i)$ exist in $P$ and $a\rightarrow \bigwedge_{i\in I} b_i=\bigwedge_{i\in I} (a\rightarrow b_i)$, $a\leadsto \bigwedge_{i\in I} b_i=\bigwedge_{i\in I} (a\leadsto b_i)$ (preservation of infimum in the consequent)
\item[(vi)] $b\leq c$ implies $a\circ b\leq a\circ c$ and $a\ast b\leq a\ast c$ (right monotone law for conjunction)
\end{itemize}
\end{theorem}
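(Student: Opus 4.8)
The plan is to derive every item from the residuation law (axiom (2)) together with Lemma~\ref{le:2.3}, systematically exploiting the symmetry between the two adjoint pairs $(\circ,\leadsto)$ and $(\ast,\rightarrow)$: I prove each assertion for $\circ/\leadsto$, and the argument for $\ast/\rightarrow$ is obtained verbatim by swapping the roles of the two conjunctions and the two implications. For (i), apply residuation to the trivial inequality $a\leadsto b\leq a\leadsto b$: substituting $c:=b$ and $q:=a\leadsto b$ in $a\circ q\leq c\Leftrightarrow q\leq a\leadsto c$ gives $a\circ(a\leadsto b)\leq b$, and likewise $a\ast(a\rightarrow b)\leq b$. (This is the counit of the adjunction; Lemma~\ref{le:2.3}(v) supplied the unit.)

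Next I treat (vi) and (ii), which are the monotonicity of the two adjoints. From $a\circ c\leq a\circ c$ residuation yields $c\leq a\leadsto(a\circ c)$; hence if $b\leq c$ then $b\leq a\leadsto(a\circ c)$, and residuation again gives $a\circ b\leq a\circ c$, proving (vi). For (ii), if $b\leq c$ then by (i) $a\circ(a\leadsto b)\leq b\leq c$, so residuation gives $a\leadsto b\leq a\leadsto c$; the $\rightarrow$ case is the same. Item (iii) is then immediate from (ii) and the definition of the complements: since $0\leq b$, part (ii) gives $a^{\sim}=a\leadsto 0\leq a\leadsto b$ and $a^-=a\rightarrow 0\leq a\rightarrow b$.

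The only statements needing a genuine argument are (iv) and (v), and these are the standard fact that a left (resp.\ right) adjoint preserves all existing joins (resp.\ meets). For (iv), set $s:=\bigvee_{i\in I}b_i$. By (vi), $a\circ b_i\leq a\circ s$ for every $i$, so $a\circ s$ is an upper bound of $\{a\circ b_i\}_{i\in I}$. If $u$ is any upper bound, then $a\circ b_i\leq u$, so by residuation $b_i\leq a\leadsto u$ for all $i$, whence $s\leq a\leadsto u$ and, once more by residuation, $a\circ s\leq u$; thus $a\circ s$ is the least upper bound, i.e.\ $\bigvee_{i\in I}(a\circ b_i)$ exists and equals $a\circ\bigvee_{i\in I}b_i$, and similarly for $\ast$. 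Dually, for (v) put $m:=\bigwedge_{i\in I}b_i$; by (ii), $a\rightarrow m\leq a\rightarrow b_i$ for all $i$, and if $v\leq a\rightarrow b_i$ for all $i$, residuation gives $a\ast v\leq b_i$, hence $a\ast v\leq m$, hence $v\leq a\rightarrow m$, so $a\rightarrow m=\bigwedge_{i\in I}(a\rightarrow b_i)$; the $\leadsto$ case is identical.

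I do not expect any real obstacle — the whole theorem is Galois-connection bookkeeping — but the one point that must be handled with care is in (iv) and (v): the hypothesis only grants existence of $\bigvee_{i}b_i$ (resp.\ $\bigwedge_{i}b_i$), so one must verify that $a\circ s$ (resp.\ $a\rightarrow m$) genuinely satisfies the universal property of the least upper bound (resp.\ greatest lower bound) of the image family, not merely that it is a plausible candidate. The chain of residuation equivalences displayed above is exactly what establishes this, and it is the substantive content of the proof.
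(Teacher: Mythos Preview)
Your proof is correct and follows essentially the same residuation/Galois-connection approach as the paper. The only organizational differences are that you prove (vi) directly from residuation before using it in (iv), whereas the paper derives (vi) from (iv) via $a\circ c=a\circ(b\vee c)=(a\circ b)\vee(a\circ c)$, and you spell out (iv) and (v) in full rather than appealing to the general theory of residuated mappings as the paper does.
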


\begin{proof}
(i): By residuation $a\circ (a\leadsto b)\leq b$ iff $a\leadsto b\leq a\leadsto b$ and $a\ast (a\rightarrow b)\leq b$ iff $a\rightarrow b\leq a\rightarrow b$.\\
(ii): If $b\leq c$, then by modus ponens $a\circ (a\leadsto b)\leq b\leq c$ and by residuation law $a\leadsto b\leq a\leadsto c$. Similarly $a\ast (a\rightarrow b)\leq b\leq c$ and so $a\rightarrow b\leq a\rightarrow c$.\\
(iii): By (ii) $a^-=a\rightarrow 0\leq a\rightarrow b$ and $a^{\sim}=a\leadsto 0\leq a\leadsto b$.\\
Parts (iv) and (v) follow from the fact that $(\circ, \leadsto)$ and $(\ast, \rightarrow)$ are residuated mappings \cite{BJ}.\\
(vi): Suppose $b\leq c$. Then $a\circ c=a\circ (b\vee c)=(a\circ b)\vee (a\circ c)\geq a\circ b$ by (iv). Proof for the second operation goes the same way.
\end{proof}

\begin{definition}\label{de:pisad} A double CI-poset $P$ satisfies
\begin{enumerate}
\item[{\rm(i)}] \emph{pseudo-involution law} iff $a^{- \sim}=a={a^{\sim}}^-$ and $a\leq b\ \Rightarrow \ b^-\leq a^-, b^{\sim}\leq a^{\sim}$
\item[{\rm(ii)}] \emph{divisibility law} iff $c\leq a, c\leq b$ $\Leftrightarrow$ $c\leq a\circ(a\leadsto b)= a\ast (a\rightarrow b)$
\item[{\rm(iii)}] \emph{ortho-exchange law} iff\\
$a^-\circ b^-=0$ and $c^{\sim}\leq a\circ b$ implies $b^-\leq a\ast c$\\
and\\
$a^{\sim}\ast b^{\sim}=0$ and $c^-\leq a\ast b$ implies $b^{\sim}\leq a\circ c$
\item[{\rm(iv)}] \emph{self-adjointess law} iff $a\circ b\leq c \, \Leftrightarrow\, a\circ c^-\leq b^-$ and $a\ast b\leq c\, \Leftrightarrow a\circ c^{\sim}\leq b^{\sim}$
\end{enumerate}
\end{definition}

\begin{lemma}\label{le:divis} A CI-poset $P$  satisfies the divisibility law iff $a\wedge b=a\ast(a\rightarrow b)=a\circ(a\leadsto b)$
\end{lemma}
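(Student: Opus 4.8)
The plan is to reduce the divisibility law to the single statement that $a\circ(a\leadsto b)$ and $a\ast(a\rightarrow b)$ both coincide with the infimum $a\wedge b$. The starting observation is that, in any double CI-poset, each of these two elements is automatically a common lower bound of $a$ and $b$: indeed $a\circ(a\leadsto b)\leq a$ and $a\ast(a\rightarrow b)\leq a$ by Lemma~\ref{le:2.3}(vii), while $a\circ(a\leadsto b)\leq b$ and $a\ast(a\rightarrow b)\leq b$ by the modus ponens law of Theorem~\ref{th:prop}(i). So the only extra information carried by the divisibility law is (a) that these two elements are equal and (b) that every common lower bound of $a$ and $b$ lies below them, i.e. that they are the \emph{greatest} common lower bound.

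For the forward direction I would assume the divisibility law, which by Definition~\ref{de:pisad}(ii) gives $a\circ(a\leadsto b)=a\ast(a\rightarrow b)=:d$ together with: for all $c$, $c\leq a$ and $c\leq b$ iff $c\leq d$. Since $d$ is a common lower bound of $a,b$ (by the starting observation, or by taking $c=d$), and since any common lower bound $c$ satisfies $c\leq d$ by the law, $d$ is the greatest lower bound; hence $a\wedge b$ exists and equals $a\circ(a\leadsto b)=a\ast(a\rightarrow b)$.

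For the converse I would assume $a\wedge b=a\ast(a\rightarrow b)=a\circ(a\leadsto b)$ for all $a,b$. Then the two products are equal, and for every $c$ we have $c\leq a$ and $c\leq b$ iff $c\leq a\wedge b$ iff $c\leq a\circ(a\leadsto b)$ (equivalently iff $c\leq a\ast(a\rightarrow b)$), which is precisely the divisibility law. I do not expect a genuine obstacle here: the argument is essentially bookkeeping, and the one point deserving care is to read the divisibility law as simultaneously an equality of the two Sasaki-type products and a characterization of the set of common lower bounds of $\{a,b\}$, after which the monotonicity and modus-ponens facts make the equivalence immediate.
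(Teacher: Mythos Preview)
Your proposal is correct and follows essentially the same route as the paper: you use Lemma~\ref{le:2.3}(vii) and modus ponens (Theorem~\ref{th:prop}(i)) to see that $a\circ(a\leadsto b)$ and $a\ast(a\rightarrow b)$ are always common lower bounds of $a$ and $b$, and then observe that the divisibility law amounts exactly to these being equal and greatest. The paper's proof records only the forward direction explicitly and leaves the (trivial) converse implicit, whereas you spell out both; otherwise the arguments coincide.
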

\begin{proof} By (vii) from Lemma \ref{le:2.3}, $a\circ(a\leadsto b)\leq a$, $a\ast(a\rightarrow b)\leq a$. By modus ponens (Theorem \ref{th:prop} (i)), $a\circ (a\leadsto b)\leq b$, $a\ast(a\rightarrow b)\leq b$.
Then divisibility implies  the desired statement.
\end{proof}

\begin{lemma}\label{le:prop}
Suppose that $^{\sim}$ and $^-$ form a pseudo-involution on $P$ and the self-adjointness law is satisfied. Then
\begin{itemize}
\item[(i)] $a\rightarrow b=(a\circ b^{\sim})^-$ and $a\leadsto b=(a\ast b^-)^{\sim}$
\item[(ii)] $P$ satisfies the divisibility law iff it satisfies ortho-exchange law, that is:
\begin{itemize}
\item[(1)] $c\leq a,b\;\Leftrightarrow\; c\leq a\ast (a\rightarrow b)=a\ast (a\circ b^{\sim})^-$\\ iff $(a^-\circ b^-=0 \mbox{ and } c^{\sim}\leq a\circ b\;\Rightarrow\; b^-\leq a\ast c)$
\item[(2)] $c\leq a,b \;\Leftrightarrow\; c\leq a\circ (a\leadsto b)=a\circ (a\ast b^-)^{\sim}$\\ iff $(a^{\sim}\ast b^{\sim}=0 \mbox{ and } c^-\leq a\ast b\;\Rightarrow\; b^{\sim}\leq a\circ c)$
\end{itemize}
\end{itemize}
\end{lemma}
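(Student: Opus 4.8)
The plan is to prove (i) by a short adjunction computation and then to use (i) to rewrite both the divisibility law and the ortho-exchange law as the same elementary inequalities, so that the equivalence in (ii) reduces to choosing the right substitutions.

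For (i), I would fix $a,b$, take an arbitrary $x\in P$, and chase the equivalences $x\le a\rightarrow b$ iff $a\ast x\le b$ (residuation) iff $a\circ b^{\sim}\le x^{\sim}$ (self-adjointness) iff $x\le(a\circ b^{\sim})^-$ (pseudo-involution, since $^-$ and $^{\sim}$ are mutually inverse antitone maps). As $x$ is arbitrary, $a\rightarrow b=(a\circ b^{\sim})^-$, and $a\leadsto b=(a\ast b^-)^{\sim}$ falls out of the mirror computation (residuation for $\circ$, the other half of self-adjointness, pseudo-involution). The inner equalities displayed in (ii) are then just special cases of (i).

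For (ii) I would treat only case (1), case (2) being its mirror image under the interchange of the two conjunctions, the two negations, and the two implications. First I record two ``dictionary'' translations used throughout: by Lemma~\ref{le:2.3}(iii) and the pseudo-involution, $a^-\circ b^-=0$ is equivalent to $b^-\le a$; and by (i) and the pseudo-involution, $(a\circ b)^-=a\rightarrow b^-$, so $c^{\sim}\le a\circ b$ is equivalent to $a\rightarrow b^-\le c$. Granting these, ``divisibility $\Rightarrow$ ortho-exchange~(1)'' should go as follows: from the hypotheses we get $b^-\le a$ and $a\rightarrow b^-\le c$; divisibility gives $b^-\le a\ast(a\rightarrow b^-)$ (legitimate since $b^-\le a$), and right monotonicity of conjunction (Theorem~\ref{th:prop}(vi)) upgrades this to $b^-\le a\ast c$. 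For the converse, given $c\le a$ and $c\le b$, I would apply ortho-exchange~(1) to the triple $(a,\,c^{\sim},\,a\rightarrow b)$: its first hypothesis becomes $a^-\circ c=0$, which holds because $c\le a$; its second hypothesis becomes $(a\rightarrow b)^{\sim}\le a\circ c^{\sim}$, which holds because $(a\rightarrow b)^{\sim}=a\circ b^{\sim}$ by (i) and the pseudo-involution, and $a\circ b^{\sim}\le a\circ c^{\sim}$ by $c\le b$ and Theorem~\ref{th:prop}(vi); its conclusion is exactly $c\le a\ast(a\rightarrow b)$. Since $a\ast(a\rightarrow b)\le a$ and $a\ast(a\rightarrow b)\le b$ automatically (Lemma~\ref{le:2.3}(vii) and modus ponens), this is the divisibility law.

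The step I expect to need the most care is the converse direction of (ii): finding the substitution $(a,c^{\sim},a\rightarrow b)$ and verifying its side conditions via the dictionary translations. The rest is routine manipulation with residuation, the pseudo-involution, and the monotonicity statements of Theorem~\ref{th:prop}.
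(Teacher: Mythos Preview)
Your proposal is correct and follows essentially the same approach as the paper: part (i) is the identical adjunction chase, and for part (ii) both directions use the same substitutions as the paper (in particular, for the converse the paper also applies ortho-exchange with $b$ replaced by $c^{\sim}$ and $c$ replaced by $d:=a\rightarrow b$). Your presentation via the two ``dictionary translations'' is a bit cleaner, and in the forward direction you use the divisibility inequality directly rather than passing through $b^-=a\wedge b^-$ as the paper does, but these are cosmetic differences.
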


\begin{proof}
(i): Using residuation and self-adjointness we stepwise get $c\leq (a\leadsto b)\,\Leftrightarrow\, a\circ c\leq b\,\Leftrightarrow\, a\ast b^-\leq c^-\,\Leftrightarrow\, c\leq (a\ast b^-)^{\sim}$ and $c\leq (a\rightarrow b)\,\Leftrightarrow\, a\ast c\leq b\,\Leftrightarrow\, a\circ b^{\sim}\leq c^{\sim}\,\Leftrightarrow\, c\leq (a\circ b^{\sim})^-$, therefore $a\leadsto b=(a\ast b^-)^{\sim}$ and $a\rightarrow b=(a\circ b^{\sim})^-$.\\
(ii): We make a proof of the first statement, the other one is then straightforward variation. Suppose that $P$ satisfies the divisibility law and let $a^-\circ b^-=0$ and $c^{\sim}\leq a\circ b$. Then $b^-\leq a^{-{\sim}}=a$  by Lemma \ref{le:2.3} (iii), and $(a\circ b)^-\leq c^{{\sim}-}=c$. Then $b^-=a\wedge b^-=a\ast (a\rightarrow b^-)=a\ast (a\circ b^{-{\sim}})^-=a\ast (a\circ b)^-\leq a\ast c$, where we use part (i) and Theorem \ref{th:prop} (vi).

 Conversely, let the ortho-exchange law be satisfied in $P$ and let $c\leq a,b$. We set $d:=a\rightarrow b=(a\circ b^{\sim})^-$. Then $d^{\sim}=a\circ b^{\sim}$ and as $c\leq b$, we get $b^{\sim}\leq c^{\sim}$, so that $a\circ b^{\sim}\leq a\circ c^{\sim}$, which is in fact $d^{\sim}\leq a\circ c^{\sim}$. It is also true that $c\leq a\,\Rightarrow\, a^-\circ c\leq a^-\circ a=a^-\circ a^{-{\sim}}=0$ by Lemma \ref{le:2.3} (iv) because
  $a\leq a$. But then $a^-\circ c=0$. Now we apply ortho-exchange law to obtain $c\leq a\ast d=a\ast (a\circ b^{\sim})^-$.
\end{proof}

\section{From pseudo Sasaki algebra to LPEA}

\begin{definition}\label{de:psSas} A structure $(P;^-,^{\sim},\circ,\ast,0,1)$
will be called a \emph{pseudo Sasaki algebra} if it satisfies the following axioms:
\begin{itemize}
\item[(1)] $a^{-\sim}=a^{\sim -}=a$, $\;\; a\leq b\,\Rightarrow\,b^-\leq a^-, b^{\sim}\leq a^{\sim}$ (pseudo-involution)
\item[(2)] $a\circ 1=1\circ a=a=1\ast a=a\ast 1$ (unity)
\item[(3)] $a\circ b\leq c$ iff $a\ast c^-\leq b^-$\\
           $a\ast b\leq c$ iff $a\circ c^{\sim}\leq b^{\sim}$ (self-adjointness)
\item[(4)] $c\leq a, c\leq b\,\Rightarrow\, c\leq a\circ (a\ast b^-)^{\sim}=a\ast (a\circ b^{\sim})^-$ (divisibility)
\item[(5)] $a\leq b^-,c\leq a^{\sim}\circ b^{\sim}\,\Rightarrow\, (a^{\sim}\circ b^{\sim})\circ c^{\sim}=a^{\sim}\circ (b^{\sim}\circ c^{\sim})$; and
        $b\leq a^{\sim}, c\leq b^-\ast a^-\ \Rightarrow \ c^-\ast (b^-\ast a^-)=(c^-\ast b^-)\ast a^-$
     (partial associativity)
\item[(6)] $a\leq b^-$ $\implies$ $(a^{\sim}\circ b^{\sim})^-=(b^-\ast a^-)^{\sim}$
\end{itemize}
\end{definition}

\begin{lemma}\label{le:1} Let $(P;^-, ^{\sim},\circ,\ast,0,1)$ be a pseudo Sasaki algebra.
For every $a,b,c\in P$ the following hold:
\begin{itemize}
\item[(a)] $a^-\circ a=a\circ a^{\sim}=0=a\ast a^-=a^{\sim}\ast a$
\item[(b)] $a\circ 0=0\circ a=0=a\ast 0=0\ast a$
\item[(c)] $a\leq b^-$ iff $a\circ b=0$\\
           $a\leq b^{\sim}$ iff $a\ast b=0$
\item[(d)] $a\circ b\leq a$\\
      $a\ast b\leq a$
\end{itemize}
\end{lemma}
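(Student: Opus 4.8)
The plan is to derive (a)--(d) of Lemma~\ref{le:1} from the axioms of a pseudo Sasaki algebra, proceeding in the order (b), (d), (c), (a), since each step feeds the next. Throughout I will lean heavily on the self-adjointness axiom~(3) and the unity axiom~(2), which together play the role that residuation played for double CI-posets.

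\textbf{Step 1: part (b).} To get $a\circ 0 = 0$, apply self-adjointness~(3) in the form ``$a\circ b\le c$ iff $a\ast c^-\le b^-$'' with $b:=0$ and $c:=0$: the right-hand side becomes $a\ast 0^-\le 0^-$, i.e. $a\ast 1\le 1$, which holds by unity~(2) (and $1\le 1$); hence $a\circ 0\le 0$, so $a\circ 0=0$. Symmetrically $a\ast 0=0$ from the second clause of~(3). For $0\circ a=0$: use ``$a\circ b\le c$ iff $a\ast c^-\le b^-$'' with the roles set up so that the conjunct on the left is $0\circ a$; concretely take $a:=0$, $b:=a$, $c:=0$, giving $0\circ a\le 0$ iff $0\ast 0^-\le a^-$, i.e. $0\ast 1\le a^-$, i.e. $0\le a^-$, which is automatic. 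Likewise $0\ast a=0$.

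\textbf{Step 2: part (d).} For $a\circ b\le a$: by self-adjointness this is equivalent to $a\ast a^-\le b^-$. Now $a\ast a^-\le b^-$ would follow once we know $a\ast a^- = 0$ (then $0\le b^-$ trivially), but that is part~(a), which I have not yet proved, so instead I argue directly: apply~(3) to $a\ast a^-\le a^-$? That is circular. Better: prove $a\ast b\le a$ first via~(3): $a\ast b\le a$ iff $a\circ a^{\sim}\le b^{\sim}$; and $a\circ a^{\sim}\le b^{\sim}$ follows if $a\circ a^{\sim}=0$, again part~(a). So the clean route is to do a small piece of~(a) first. Observe that from unity and pseudo-involution, $a^-\circ a^{-\sim} = a^-\circ a$, and we want this to be $0$. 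Using self-adjointness~(3): $a^-\circ a\le 0$ iff $a^-\ast 0^-\le a^{\sim}$, i.e. $a^-\ast 1\le a^{\sim}$, i.e. $a^-\le a^{\sim}$?? That need not hold in general. Instead use the other clause: $a^-\circ a\le c$ for the right $c$. Actually the correct trick: $a\circ a^{\sim}\le 0$ iff (by the first clause of~(3) with $c:=0$) $a\ast 0^-\le (a^{\sim})^-$, i.e. $a\ast 1\le a^{\sim -}=a$, i.e. $a\le a$, true. Hence $a\circ a^{\sim}=0$. Symmetrically, from the second clause, $a^{\sim}\ast a\le 0$ iff $a^{\sim}\circ 0^{\sim}\le a^{\sim}$, i.e. $a^{\sim}\le a^{\sim}$, true, so $a^{\sim}\ast a=0$; replacing $a$ by $a^-$ and using $a^{-\sim}=a$ wait---more directly, apply the same argument to get $a^-\ast a = a^-\ast a^{--}$; hmm, let me instead get $a^-\circ a$: from $a\circ a^{\sim}=0$ replace $a$ by $a^-$, giving $a^-\circ a^{-\sim}=a^-\circ a=0$. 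And from $a^{\sim}\ast a=0$ replace $a$ by $a^-$ hmm that gives $a^{-\sim}\ast a^- = a\ast a^-=0$. So in fact Step~2 folds into establishing part~(a) outright.

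\textbf{Step 3: finish (a), then (d) and (c).} Having $a\circ a^{\sim}=0$, $a^-\circ a=0$, $a\ast a^-=0$, $a^{\sim}\ast a=0$ from the computations above, part~(a) is done. Then part~(d): $a\circ b\le a$ iff $a\ast a^-\le b^-$ by~(3), and $a\ast a^-=0\le b^-$ always; similarly $a\ast b\le a$ iff $a\circ a^{\sim}\le b^{\sim}$, and $a\circ a^{\sim}=0$. Finally part~(c): ``$a\circ b=0$ iff $a\le b^-$''. For the forward direction, if $a\circ b=0$ then by~(3) $a\ast 0^-\le b^-$, i.e. $a\ast 1\le b^-$, i.e. $a\le b^-$. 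For the converse, if $a\le b^-$ then by pseudo-involution $b=b^{-\sim}\le a^{\sim}$? No---$a\le b^-$ gives $b^{--}\le a^-$, i.e. $b\le a^-$ is wrong direction; rather antitonicity gives $(b^-)^-\le a^-$ is also wrong. Carefully: $a\le b^-$ and antitonicity of $^-$ yield $b^{--}\le a^-$, and $b^{--}=b$ by~(1), so $b\le a^-$; then $a\circ b\le a\circ a^- $? Not monotone yet. Instead: $a\le b^-$ means $a\circ b\le 0$ by~(3) directly, since ``$a\circ b\le c$ iff $a\ast c^-\le b^-$'' with $c:=0$ reads ``$a\circ b\le 0$ iff $a\le b^-$''. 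So both directions of the first clause of~(c) are literally instances of~(3) with $c=0$ and $0^-=1$, $1\ast x = x$; the second clause is the symmetric instance of the second clause of~(3).

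\textbf{Anticipated obstacle.} The only genuine subtlety is bookkeeping the direction of the involution and making sure each inequality $x\le x$ invoked is the trivial one; the temptation to assume monotonicity of $\circ,\ast$ (which is not among the axioms of a pseudo Sasaki algebra, unlike in the double CI-poset setting) must be resisted, so every step has to be routed through self-adjointness~(3) and unity~(2) with a suitable choice of the free variable set to $0$ or $1$. Axioms (4), (5), (6) are not needed for this lemma.
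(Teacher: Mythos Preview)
Your final arguments are correct and coincide with the paper's: each item is reduced to an instance of self-adjointness~(3) with $c=0$ (or $b=0$), combined with unity~(2) and the consequences $0^-=0^{\sim}=1$ of pseudo-involution; axioms (4)--(6) are indeed not needed. Two small remarks on your abandoned branches. First, the computation you discarded, ``$a^-\circ a\le 0$ iff $a^-\ast 0^-\le a^{\sim}$'', contains a slip: self-adjointness with $x=a^-$, $y=a$, $c=0$ gives $a^-\ast 0^-\le a^-$ (the right-hand side is $y^-=a^-$, not $a^{\sim}$), i.e.\ $a^-\le a^-$, which is trivially true---so the direct route you thought failed actually works, and is precisely how the paper handles all four identities in~(a) without any substitution trick. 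Second, the claim ``$b^{--}=b$ by~(1)'' in your discussion of~(c) is not licensed by axiom~(1), which only asserts $b^{-\sim}=b^{\sim-}=b$; fortunately you abandon that branch as well and land on the paper's one-line argument that ``$a\circ b\le 0$ iff $a\le b^-$'' is literally self-adjointness with $c=0$.
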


\begin{proof}
(a): It is clear from axiom (1), that $1^-=1^{\sim}=0$, $0^-=0^{\sim}=1$. By the third axiom $a\circ a^{\sim}\leq 0$ iff $a\ast 0^-\leq a$ which is true by (2). Also $a^-\circ a\leq 0$ iff $a^-\ast 0^-\leq a^-$, $a\ast a^-\leq 0$ iff $a\circ 0^{\sim}\leq a$ and $a^{\sim}\ast a\leq 0$ iff $a^{\sim}\circ 0^{\sim}\leq a^{\sim}$, where all of the latter conditions hold.\\
(b): By (3) $a\circ 0\leq 0$ if and only if $a\ast 0^-\leq 0^-$ and while $a\ast 0^-=a$ and $0^-=1$, we get a true statement. On the other hand $0\circ a\leq 0$ iff $0\ast 0^-\leq a^-$. But $0\ast 0^-$ is 0 by (a) and so $0\ast 0^-\leq a^-$ holds again. Proof for $a\ast 0$ and $0\ast a$ is similar.\\
(c): $a\leq b^- \Leftrightarrow a\ast 1\leq b^- \Leftrightarrow a\ast 0^-\leq b^-$ and self-adjointness now implies that $a\circ b\leq 0$ which is iff $a\circ b=0$. Similarly $a\leq b^{\sim} \Leftrightarrow a\circ 1\leq b^{\sim} \Leftrightarrow a\circ 0^{\sim}\leq b^{\sim} \Leftrightarrow a\ast b\leq 0 \Leftrightarrow a\ast b=0$.\\
(d): $a\circ b\leq a$ iff $a\ast a^-\leq b^-$, but by (a) $a\ast a^-=0$, so that the inequality is true. Similarly $a\ast b\leq a$ iff $a\circ a^{\sim}\leq b^{\sim}$ where $a\circ a^{\sim}=0$.
\end{proof}

\begin{theorem}\label{th:CIS} Every pseudo Sasaki algebra is a double CI-lattice. Conversely, a double CI-poset is a pseudo Sasaki algebra iff it has a pseudo-involution, self-adjoiness, divisibility (ortho-excahnge), and satisfies  conditions (5) and (6) from Definition \ref{de:psSas}.
\end{theorem}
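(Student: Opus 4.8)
The plan is to prove both assertions by transcribing between the signature $(\circ,\ast,\rightarrow,\leadsto)$ of a double CI-poset and the signature $(\circ,\ast,{}^-,{}^{\sim})$ of a pseudo Sasaki algebra, letting the results of Section~2 do the work. For the first assertion, given a pseudo Sasaki algebra $(P;{}^-,{}^{\sim},\circ,\ast,0,1)$, I would define $a\rightarrow b:=(a\circ b^{\sim})^-$ and $a\leadsto b:=(a\ast b^-)^{\sim}$. The unity law of a double CI-poset is just axiom~(2), and $a\rightarrow 0=a^-$, $a\leadsto 0=a^{\sim}$ hold by unity together with $0^-=0^{\sim}=1$, so the two notations for the complements agree. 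For residuation, I would use that by axiom~(1) the maps ${}^-$ and ${}^{\sim}$ are mutually inverse, order-reversing bijections: applying ${}^{\sim}$ to the self-adjointness equivalence $a\circ b\le c\Leftrightarrow a\ast c^-\le b^-$ turns it into $a\circ b\le c\Leftrightarrow b\le(a\ast c^-)^{\sim}=a\leadsto c$, and similarly $a\ast b\le c\Leftrightarrow a\circ c^{\sim}\le b^{\sim}\Leftrightarrow b\le(a\circ c^{\sim})^-=a\rightarrow c$. Hence $P$ is a double CI-poset with a pseudo-involution satisfying self-adjointness, and Lemma~\ref{le:prop}(i) confirms these very formulas for $\rightarrow,\leadsto$. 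It remains to see $P$ is a lattice: via Lemma~\ref{le:prop}(i) axiom~(4) becomes the forward half of the divisibility law, the backward half being automatic from Lemma~\ref{le:2.3}(vii) and Theorem~\ref{th:prop}(i), so $P$ satisfies divisibility and Lemma~\ref{le:divis} (equivalently Lemma~\ref{le:prop}(ii)) gives $a\wedge b=a\ast(a\rightarrow b)=a\circ(a\leadsto b)$; since ${}^-$ is an order-reversing bijection, $a\vee b=(a^{\sim}\wedge b^{\sim})^-$ then exists as well, so $P$ is a double CI-lattice. Axioms~(5) and~(6) play no role in this direction.

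For the converse, let $(P;\le,\circ,\ast,\rightarrow,\leadsto,0,1)$ be a double CI-poset, recall that $\rightarrow$ and $\leadsto$ are uniquely determined by $\circ$ and $\ast$ through residuation, and put $a^-:=a\rightarrow 0$, $a^{\sim}:=a\leadsto 0$. If the reduct $(P;{}^-,{}^{\sim},\circ,\ast,0,1)$ is a pseudo Sasaki algebra, then the pseudo-involution law is axiom~(1), self-adjointness is axiom~(3), conditions~(5),(6) are axioms~(5),(6), and the divisibility law holds because its ``$\Rightarrow$'' half is axiom~(4) rewritten with the formulas $a\leadsto b=(a\ast b^-)^{\sim}$, $a\rightarrow b=(a\circ b^{\sim})^-$ of Lemma~\ref{le:prop}(i) while its ``$\Leftarrow$'' half is automatic; by Lemma~\ref{le:prop}(ii) divisibility is equivalent to ortho-exchange. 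Conversely, suppose $P$ has a pseudo-involution, self-adjointness, divisibility (equivalently ortho-exchange, again by Lemma~\ref{le:prop}(ii)), and conditions~(5),(6). Then axiom~(1) of Definition~\ref{de:psSas} is the pseudo-involution law, axiom~(2) is the unity law of the double CI-poset, axiom~(3) is self-adjointness, axioms~(5),(6) are assumed, and axiom~(4) is the ``$\Rightarrow$'' direction of the divisibility law once Lemma~\ref{le:prop}(i) is used to write $a\circ(a\leadsto b)=a\circ(a\ast b^-)^{\sim}$ and $a\ast(a\rightarrow b)=a\ast(a\circ b^{\sim})^-$; hence $(P;{}^-,{}^{\sim},\circ,\ast,0,1)$ is a pseudo Sasaki algebra.

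The argument is almost entirely transcription; the only genuine step is the passage to the lattice structure in the first direction, where one extracts the existence of binary meets from the divisibility axiom (the content of Lemma~\ref{le:divis} and Lemma~\ref{le:prop}(ii)) and then produces joins by dualizing through the order-reversing bijection ${}^-$. I expect the main care to lie in keeping the two formulations of divisibility matched — the biconditional one of Definition~\ref{de:pisad}(ii) versus the one-directional one of Definition~\ref{de:psSas}(4) — and in checking that the self-adjointness law of Definition~\ref{de:pisad}(iv) is precisely axiom~(3) of Definition~\ref{de:psSas} under the substitution $a^-=a\rightarrow 0$, $a^{\sim}=a\leadsto 0$.
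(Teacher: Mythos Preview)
Your proposal is correct and follows essentially the same route as the paper: define $a\rightarrow b=(a\circ b^{\sim})^-$, $a\leadsto b=(a\ast b^-)^{\sim}$, derive residuation from self-adjointness plus the pseudo-involution, extract meets from the divisibility axiom via Lemma~\ref{le:divis}, and obtain joins by dualizing through ${}^-$; for the converse, use Lemma~\ref{le:prop}(i) to translate the divisibility law into axiom~(4). You are in fact slightly more thorough than the paper in spelling out both directions of the ``iff'' in the converse and in noting that axioms~(5) and~(6) are not needed for the forward direction.
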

\begin{proof}  Let $(P;^-,^{\sim},\circ,\ast,0,1)$ be a pseudo Sasaki algebra. For $a,b\in P$, define $a\rightarrow b:=(a\circ b^{\sim})^-$ and $a\leadsto b:=(a\ast b^-)^{\sim}$.
Now consider the structure \newline $(P;\leq,0,1, \circ, \ast, \rightarrow, \leadsto)$. The unity law $1\circ a=a\circ 1=1\ast a=a\ast 1=a$ is satisfied by (2) of Definition \ref{de:psSas}.
By (3) and (1) of Definition \ref{de:psSas}, $a\circ b\leq c$ iff $a\ast c^-\leq b^-$ iff $b\leq (a\ast c^-)^{\sim}=a\leadsto c$. Similarly, $a\ast b\leq c$ iff $a\circ c^{\sim}\leq b^{\sim}$ iff $b\leq (a\circ c^{\sim})^-=a\rightarrow c$. This proves residuation and therefore
$P$ is a double CI-poset. Condition (4) of Definition \ref{de:psSas} can be rewritten as $c\leq a, c\leq b$ $\implies $ $c\leq a\circ(a\leadsto b)=a\ast (a\rightarrow b)$. By Lemma \ref{le:divis}, $a\wedge b= a\circ(a\leadsto b)=a\ast (a\rightarrow b)$.
Condition (1) of Definition \ref{de:psSas} yields $a\vee b=(a^-\wedge b^-)^{\sim}=(a^{\sim}\wedge b^{\sim})^-$. It follows that $P$ is a CI-lattice.

Conversely, let $(P;\leq,0,1, \circ, \ast, \rightarrow, \leadsto)$ be a CI-poset that has a pseudo-involution, self-adjointness, divisibility and satisfies conditions (5) and (6) from Definition \ref{de:psSas}. By Lemma \ref{le:prop}, $a\rightarrow b=(a\circ b^{\sim})^-$, and $a\leadsto b= (a\ast b^-)^{\sim}$.  With $a^-=a\rightarrow 0$, $a^{\sim}=a\leadsto 0$, we obtain a pseudo Sasaki algebra.
\end{proof}

\begin{definition}
A {\it pseudo-effect algebra}  (PEA) is a partial algebra $(P;\oplus,0,1)$ of the type $(2,0,0)$ where the following axioms hold for any $a,b,c\in P$:
\begin{itemize}
\item[(PE1)] $a\oplus b$ and $(a\oplus b)\oplus c$ exist iff $b\oplus c$ and $a\oplus (b\oplus c)$ exist and in this case $(a\oplus b)\oplus c=a\oplus (b\oplus c)$
\item[(PE2)] there exists exactly one $d\in E$ and exactly one $e\in E$ such that $a\oplus d=e\oplus a=1$
\item[(PE3)] if $a\oplus b$ exists, there are elements $d,e\in E$ such that $a\oplus b=d\oplus a=b\oplus e$
\item[(PE4)] if $a\oplus 1$ or $1\oplus a$ exists, then $a=0$
\end{itemize}
\end{definition}

If the pseudo-effect algebra is lattice ordered, we speak about lattice pseudo-effect algebra or LPEA. In a pseudo-effect algebra, we may define a partial order in the following way:
$$a\leq b  \mbox{ iff } c\oplus a=b \mbox{ for some } c\in E.$$
Equivalently,
$$
a\leq b  \mbox{ iff } a\oplus d=b \mbox{ for some } d\in E.
$$
These equivalences determine two partial subtractions ("right" and "left") $\diagdown$ and $\diagup$ like this: $b\diagdown a$ is defined and equals $x$ iff $b=x\oplus a$, and $a\diagup b$ is defined and equals $y$ iff $b=a\oplus y$.
Thus both $b\diagdown a$ and $a\diagup b$ are defined iff $a\leq b$, and then $(b\diagdown a)\oplus a=b=a\oplus (a\diagup b)$. Moreover, for the elements $d$ and $e$ in axiom (PE2) we write
$1\diagdown a=:a^{-}$ (the "left" complement) and $a\diagup 1=a^{\sim}$ (the "right" complement). Clearly, $0^{\sim}=1=0^-$ and $1^{\sim}=0=1^-$.  Notice that $a\oplus b$ exists iff $a\leq b^{-}$, equivalently, iff $b\leq a^{\sim}$.

\begin{lemma}\label{le:minus}
If $a\leq b$, then
\begin{itemize}
\item[(a)] $a\diagup b=a^{\sim}\diagdown b^{\sim}$
\item[(b)] $b\diagdown a=b^-\diagup a^-$
\end{itemize}

If $a\leq b^-$, then
\begin{itemize}
\item[(c)] $(a\oplus b)^-=b^-\diagdown a$
\item[(d)] $(a\oplus b)^{\sim}=b\diagup a^{\sim}$
\end{itemize}
\end{lemma}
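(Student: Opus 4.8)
The plan is to prove all four identities by a single device: start from one of the two defining relations for a complement, namely $x^-\oplus x=1$ or $x\oplus x^\sim=1$ for a suitably chosen $x$, substitute the $\oplus$-decomposition provided by the hypothesis ($a\leq b$ in (a),(b); $a\leq b^-$, equivalently $b\leq a^\sim$, in (c),(d)), reassociate using (PE1), and then read off the claimed equality from the uniqueness clause of (PE2) together with the definitions of $\diagdown$ and $\diagup$.

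Concretely, for (a), since $a\leq b$ write $b=a\oplus(a\diagup b)$ and substitute into $1=b\oplus b^\sim$, obtaining $1=a\oplus(a\diagup b)\oplus b^\sim$. The outer sum exists and the inner sum $a\oplus(a\diagup b)=b$ exists, so (PE1) guarantees that $(a\diagup b)\oplus b^\sim$ exists and $a\oplus\bigl((a\diagup b)\oplus b^\sim\bigr)=1$; uniqueness of the right complement of $a$ then gives $a^\sim=(a\diagup b)\oplus b^\sim$, which by the definition of $\diagdown$ is exactly $a\diagup b=a^\sim\diagdown b^\sim$ (and incidentally $b^\sim\leq a^\sim$). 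Part (b) is the mirror image: write $b=(b\diagdown a)\oplus a$, substitute into $1=b^-\oplus b$, reassociate, and use uniqueness of the left complement of $a$ to get $a^-=b^-\oplus(b\diagdown a)$, i.e.\ $b\diagdown a=b^-\diagup a^-$.

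For (c),(d) the hypothesis $a\leq b^-$ (equivalently $b\leq a^\sim$) makes $a\oplus b$ defined. For (c), write $b^-=(b^-\diagdown a)\oplus a$ and substitute into $1=b^-\oplus b$ to get $1=(b^-\diagdown a)\oplus a\oplus b$; reassociating (legitimate by (PE1), since $a\oplus b$ exists) and invoking uniqueness of the left complement of $a\oplus b$ yields $(a\oplus b)^-=b^-\diagdown a$. For (d), write $a^\sim=b\oplus(b\diagup a^\sim)$, substitute into $1=a\oplus a^\sim$, reassociate to $1=(a\oplus b)\oplus(b\diagup a^\sim)$, and use uniqueness of the right complement of $a\oplus b$ to obtain $(a\oplus b)^\sim=b\diagup a^\sim$.

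The only step demanding care is keeping track of which partial sums are defined before each reassociation, which is precisely the role of (PE1); in every case the needed "half" of the associativity axiom is visibly at hand, since the relevant double sum equals $1$ and the relevant simple sum is one of $b$, $b^-$, $a^\sim$, or $a\oplus b$, so no real obstacle arises. One also uses that, by (PE2), $1$ admits a unique expression as a two-term $\oplus$-sum with a prescribed left (resp.\ right) summand, which is what validates the final "read off" steps.
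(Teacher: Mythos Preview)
Your argument is correct and follows essentially the same route as the paper's proof: in each case one forms the triple $\oplus$-sum equal to $1$, reassociates via (PE1), and reads off the result using the uniqueness in (PE2) together with the definitions of $\diagdown$ and $\diagup$. The only cosmetic difference is that for (c) and (d) the paper starts from the complement identity $(a\oplus b)^-\oplus(a\oplus b)=1$ (resp.\ $(a\oplus b)\oplus(a\oplus b)^\sim=1$) and peels off $b$ (resp.\ $a$) first, whereas you start from $b^-\oplus b=1$ (resp.\ $a\oplus a^\sim=1$) and build up; this is the same triple sum traversed in opposite directions.
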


\begin{proof}
(a):
$$a\oplus a\diagup b\oplus b^{\sim}=1$$
$$a\diagup b\oplus b^{\sim}=a\diagup 1=a^{\sim}$$
$$a\diagup b=a^{\sim}\diagdown b^{\sim}$$
(b):
$$b^-\oplus b\diagdown a\oplus a=1$$
$$b^-\oplus b\diagdown a=1\diagdown a=a^-$$
$$b\diagdown a=b^-\diagup a^-$$
(c):
$$(a\oplus b)^-\oplus a\oplus b=1$$
$$(a\oplus b)^-\oplus a=1\diagdown b=b^-$$
$$(a\oplus b)^-=b^-\diagdown a$$
(d):
$$a\oplus b\oplus(a\oplus b)^{\sim}=1$$
$$b\oplus(a\oplus b)^{\sim}=a\diagup 1=a^{\sim}$$
$$(a\oplus b)^{\sim}=b\diagup a^{\sim}$$
\end{proof}

\begin{definition}\label{de:oplus} Let $(P;^-,^{\sim},\circ,\ast,0,1)$ be a pseudo Sasaki algebra.
(i) For $a\leq b^-$ (or equivalently $b\leq a^{\sim}$):\\
$a\oplus b:= (a^{\sim}\circ b^{\sim})^-=(b^-\ast a^-)^{\sim}$

(ii) For $a\leq b$:\\
$b\diagdown a:=a^-\circ b$, $a\diagup b:=a^{\sim}\ast b$
\end{definition}

\begin{lemma}\label{le:subtract} If $a\leq b$, then $b=(a^-\ast(a^-\circ b)^-)^{\sim}=b\diagdown a \oplus a$, and $b=(a^{\sim}\circ (a^{\sim}\ast b)^{\sim})^-=a\oplus (a\diagup b)$.
\end{lemma}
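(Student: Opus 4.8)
The plan is to verify the two claimed identities by unwinding Definition~\ref{de:oplus} and using the axioms of a pseudo Sasaki algebra together with Lemma~\ref{le:1}. I will treat the first identity $b=(a^-\ast(a^-\circ b)^-)^{\sim}=(b\diagdown a)\oplus a$ in detail; the second follows by the mirror-symmetric argument, interchanging $\circ$ with $\ast$ and $^-$ with $^{\sim}$.

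First I would set $x:=b\diagdown a=a^-\circ b$, which is a legitimate definition since $a\leq b$. The goal is to show that $x\oplus a$ is defined and equals $b$. By Definition~\ref{de:oplus}(i), $x\oplus a$ is defined precisely when $x\leq a^-$ (equivalently $a\leq x^{\sim}$), and in that case $x\oplus a=(x^{\sim}\circ a^{\sim})^-=(a^-\ast x^-)^{\sim}$. The most convenient closed form to aim at is the right-hand one, $x\oplus a=(a^-\ast x^-)^{\sim}=(a^-\ast(a^-\circ b)^-)^{\sim}$, which is exactly the middle expression in the statement; so once I know $x\leq a^-$, the equality of the first two expressions in the theorem is automatic, and only $b=(a^-\ast(a^-\circ b)^-)^{\sim}$ remains.

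To see $x\leq a^-$: from Lemma~\ref{le:1}(d), $a^-\circ b\leq a^-$, i.e.\ $x\leq a^-$, so $x\oplus a$ is defined. It then remains to prove $(a^-\ast x^-)^{\sim}=b$, equivalently (applying $^{\sim-}=\mathrm{id}$ from axiom~(1) and the order-reversal in~(1)) that $a^-\ast x^-=b^{\sim}$, equivalently $a^-\ast(a^-\circ b)^-=b^{\sim}$. I would prove this by establishing the two inequalities. For $a^-\ast(a^-\circ b)^-\leq b^{\sim}$: apply self-adjointness (axiom~(3), first equivalence) in the form $a^-\ast(a^-\circ b)^-\leq b^{\sim}\iff a^-\circ b^{\sim-}\leq (a^-\circ b)^{--}$, i.e.\ $a^-\circ b\leq a^-\circ b$, which holds trivially. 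For the reverse inequality $b^{\sim}\leq a^-\ast(a^-\circ b)^-$: since $a\leq b$ gives $a\leq b$ and trivially $a\leq a$, the divisibility axiom~(4) yields $a\leq a^-{}^{\sim}\ast(a^-{}^{\sim}\circ a^{-\sim\sim})^-$ — rather, I would apply divisibility with the roles chosen so that $a^{\sim}\wedge b^{\sim}$ is realized; concretely, since $b^{\sim}\leq a^{\sim}$ and $b^{\sim}\leq b^{\sim}$, divisibility~(4) applied to the pair $(a^{\sim},b^{\sim})$ gives $b^{\sim}\leq a^{\sim}\ast((a^{\sim})\circ(b^{\sim})^{\sim})^- = a^{\sim}\ast(a^{\sim}\circ b)^-$. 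Using $a^{\sim}$ in place of $a^-$ here indicates I should instead feed divisibility the pair $(a^-,\,b^{\sim})$ after checking $b^{\sim}\leq a^-$ fails in general, so the correct move is: from $a\le b$ we get $b^-\le a^-$ and $a\le b$, hence by divisibility on $(a,b)$, $a\le a\ast(a\circ b^{\sim})^-$ and dually $a\le a\circ(a\ast b^-)^{\sim}$; then take complements and push the order-reversing involution through, converting $a\le a\circ(a\ast b^-)^{\sim}$ into $(a\ast b^-)^{\sim-}\ge$ something — this is the step that needs care.

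The main obstacle I anticipate is exactly this last bookkeeping: matching the two-sided-complement shape of Definition~\ref{de:oplus} with the two-sided-complement shape appearing in the divisibility axiom~(4), keeping track of which argument carries $^-$ and which carries $^{\sim}$, and invoking self-adjointness in the correct one of its two forms. Once the identity $a^-\ast(a^-\circ b)^-=b^{\sim}$ is secured, applying $^{\sim}$ and axiom~(1) finishes the first claim, and the inequality chain for the second claim is obtained from the first by the $\circ\leftrightarrow\ast$, $^-\leftrightarrow{}^{\sim}$ symmetry of all the axioms. I would also remark that the $\oplus$ in both formulas is well-defined by Lemma~\ref{le:1}(d) as noted above, so no separate definedness check is needed.
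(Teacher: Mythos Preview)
Your overall strategy is the same as the paper's, but you introduce a concrete bookkeeping error that is precisely the source of the ``obstacle'' you describe. From $(a^-\ast x^-)^{\sim}=b$ you want to cancel the outer $^{\sim}$; since axiom~(1) gives $y^{\sim-}=y$, applying $^-$ to both sides yields $a^-\ast x^-=b^-$, not $b^{\sim}$. With the correct target $b^-$, both inequalities go through immediately. For $a^-\ast(a^-\circ b)^-\le b^-$, the second form of self-adjointness gives the equivalent $a^-\circ (b^-)^{\sim}\le((a^-\circ b)^-)^{\sim}$, i.e.\ $a^-\circ b\le a^-\circ b$, which is trivial (this is essentially your computation, but with the correct right-hand side the double complements reduce cleanly via $(\cdot)^{-\sim}=\mathrm{id}$). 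For $b^-\le a^-\ast(a^-\circ b)^-$, feed divisibility axiom~(4) the pair $(a^-,b^-)$ with $c=b^-$: since $a\le b$ gives $b^-\le a^-$ and trivially $b^-\le b^-$, axiom~(4) yields $b^-\le a^-\ast(a^-\circ(b^-)^{\sim})^-=a^-\ast(a^-\circ b)^-$. This is exactly the paper's argument; your attempts to use the pairs $(a^{\sim},b^{\sim})$ or $(a^-,b^{\sim})$, and then to push involutions through an inequality of the form $a\le a\circ(a\ast b^-)^{\sim}$, are detours forced by chasing $b^{\sim}$ instead of $b^-$.

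So there is no missing idea: fix the single sign slip $b^{\sim}\to b^-$ and your outline collapses to the paper's two-line proof (divisibility on $(a^-,b^-)$ gives $b^-=a^-\ast(a^-\circ b)^-$; apply $^{\sim}$ and unwind Definition~\ref{de:oplus}). The second identity is indeed the $\circ\leftrightarrow\ast$, $^-\leftrightarrow{}^{\sim}$ mirror, applied to the pair $(a^{\sim},b^{\sim})$.
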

\begin{proof} If $a\leq b$, then by axiom (1), $b^-\leq a^-$, and $b^-\leq b^-$. By divisibility, $b^-=a^-\ast(a^-\circ b)^-$. Similarly, from $b^{\sim}\leq a^{\sim}$, $b^{\sim}\leq b^{\sim}$ we obtain $b^{\sim}=a^{\sim}\circ(a^{\sim}\ast b)^{\sim}$. The rest follows by (1) and Definition \ref{de:oplus} (ii) and (i), taking into account that $a^-\circ b\leq a^-, a^{\sim}\ast b\leq a^{\sim}$.
\end{proof}

\begin{theorem}\label{th:PSA-PEA}  With the operation $\oplus$ from Definition \ref{de:oplus},
a pseudo Sasaki algebra $(P;\oplus,0,1)$ is a lattice ordered PEA.
\end{theorem}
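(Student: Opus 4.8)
The plan is to verify the pseudo-effect algebra axioms (PE1)--(PE4) for $(P;\oplus,0,1)$ directly, using the self-adjointness and divisibility machinery from Section~2 together with the special axioms (5) and (6) of Definition~\ref{de:psSas}, and then to observe that the partial order induced by $\oplus$ agrees with the given order $\leq$, so that lattice-orderedness follows from Theorem~\ref{th:CIS}. First I would record the basic arithmetic of $\oplus$. From Definition~\ref{de:oplus}, $a\oplus b$ is defined precisely when $a\leq b^-$ (equivalently $b\leq a^{\sim}$, by pseudo-involution), and $a\oplus b=(a^{\sim}\circ b^{\sim})^-=(b^-\ast a^-)^{\sim}$, the two expressions agreeing by axiom~(6). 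The unit laws $0\oplus a=a=a\oplus 0$ follow since $0^{\sim}=1$, $0^-=1$ and $1\circ x=x\ast 1=x$ by unity~(2); this already handles (PE4): if $a\oplus 1$ exists then $a\leq 1^-=0$, so $a=0$, and symmetrically for $1\oplus a$.

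Next I would establish (PE2) and (PE3), the existence/uniqueness of complements and the ``both-sided'' cancellation property. For (PE2), $a\oplus a^{\sim}=(a^{\sim\sim}\circ a^{\sim})^{?}$---more directly, $a\leq (a^{\sim})^-=a$ shows $a\oplus a^{\sim}$ is defined, and $a\oplus a^{\sim}=(a^{\sim}\circ a^{\sim\sim})^-=(a^{\sim}\circ a)^{-}$; by Lemma~\ref{le:1}(a) $a^{\sim}\ast a=0$, and one shows $a^{\sim}\circ a$ forces $a\oplus a^{\sim}=1$ using $0^-=1$ and the identity $(b^-\ast a^-)^{\sim}$ with $b=a^{\sim}$: then $b^-\ast a^-=a\ast a^-=0$, so $a\oplus a^{\sim}=0^{\sim}=1$. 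Similarly $a^-\oplus a=1$. Uniqueness: if $a\oplus d=1$ then $(d^-\ast a^-)^{\sim}=1$, so $d^-\ast a^-=0$, so $d^-\leq(a^-)^{\sim}=a$ by Lemma~\ref{le:1}(c) and pseudo-involution; the reverse inequality comes from $a\leq d^-$ (definedness), giving $d^-=a$, i.e.\ $d=a^{\sim}$, and dually for the left unit. For (PE3), given $a\oplus b$ defined, the witnesses are $d:=a\oplus b\diagdown(\ )$---concretely one checks $a\oplus b=(\text{something})\oplus a$ and $=b\oplus(\text{something})$ using Lemma~\ref{le:subtract} and Definition~\ref{de:oplus}(ii): since $a\leq a\oplus b$ and $b\leq a\oplus b$ (which themselves need Lemma~\ref{le:1}(d) applied to the $\circ,\ast$ forms of $\oplus$), the elements $((a\oplus b)\diagdown b)$ and $((a\oplus b)\diagup b)^{\sim}$-type expressions provide $d,e$ with $a\oplus b=d\oplus a=b\oplus e$.

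The main obstacle is (PE1), the associativity axiom, and this is exactly where partial associativity~(5) must be deployed. Here the plan is: suppose $a\oplus b$ and $(a\oplus b)\oplus c$ are defined. Translate definedness into order conditions---$a\leq b^-$ and $a\oplus b\leq c^{\sim}$---and rewrite $(a\oplus b)\oplus c$ using the $(\,\cdot\,^{\sim}\circ\,\cdot\,^{\sim})^-$ form: $(a\oplus b)\oplus c=((a\oplus b)^{\sim}\circ c^{\sim})^-$, and $(a\oplus b)^{\sim}=a^{\sim}\ast b^{\sim}$... rather, using Lemma~\ref{le:minus}(c),(d)-analogues, $(a\oplus b)^{\sim}=b\diagup a^{\sim}=a^{\sim}\ast b$ is not quite right; instead $(a\oplus b)^{\sim}$ should be massaged into $a^{?}\circ b^{?}$ form so that axiom~(5) (with the substitution matching $a\leq b^-$, $c\leq a^{\sim}\circ b^{\sim}$) applies to re-associate, yielding an expression symmetric in the roles of $a$ and $c$; from that symmetry one reads off that $b\oplus c$ and $a\oplus(b\oplus c)$ are defined and equal. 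The converse direction is the mirror image. I expect the bookkeeping of which complement ($^-$ versus $^{\sim}$) attaches to which argument, and lining up the hypotheses of~(5) exactly, to be the delicate part; the dual clause of~(5) (the one written with $\ast$) handles the mirrored associativity so that no extra hypotheses are needed. Finally, to see the induced order is $\leq$: $a\leq_{\oplus}b$ means $b=c\oplus a$ for some $c$, i.e.\ $b=(c^{\sim}\circ a^{\sim})^-$ with $c\leq a^{\sim}$; Lemma~\ref{le:subtract} shows every $a\leq b$ arises this way with $c=b\diagdown a$, and conversely $c\oplus a\geq a$ by Lemma~\ref{le:1}(d). Hence the orders coincide, $P$ is lattice ordered by Theorem~\ref{th:CIS}, and the proof is complete.
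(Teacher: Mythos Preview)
Your overall plan coincides with the paper's: verify (PE1)--(PE4) from the pseudo Sasaki axioms, use Lemma~\ref{le:subtract} for (PE3), and get the lattice structure from divisibility plus pseudo-involution (as in Theorem~\ref{th:CIS}). Your treatment of (PE2), (PE4), and the coincidence of the induced order with $\leq$ is essentially what the paper does.

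The one genuine stumble is in (PE1). You write that $(a\oplus b)^{\sim}$ ``should be massaged into $a^{?}\circ b^{?}$ form'' and try $a^{\sim}\ast b^{\sim}$ and $a^{\sim}\ast b$ before giving up. But this step is immediate: by Definition~\ref{de:oplus}, $a\oplus b=(a^{\sim}\circ b^{\sim})^-$, so pseudo-involution gives
\[
(a\oplus b)^{\sim}=\bigl((a^{\sim}\circ b^{\sim})^-\bigr)^{\sim}=a^{\sim}\circ b^{\sim}.
\]
With this in hand the whole computation is a single line, exactly as in the paper:
\[
(a\oplus b)\oplus c=\bigl((a^{\sim}\circ b^{\sim})\circ c^{\sim}\bigr)^-
=\bigl(a^{\sim}\circ (b^{\sim}\circ c^{\sim})\bigr)^-=a\oplus(b\oplus c),
\]
where the middle equality is precisely axiom~(5): its hypotheses $a\leq b^-$ and $c\leq a^{\sim}\circ b^{\sim}$ are exactly the definedness conditions for $a\oplus b$ and $(a\oplus b)\oplus c$. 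No separate ``dual clause'' argument is needed. Also, do not reach for Lemma~\ref{le:minus} here: that lemma is stated for PEAs, so invoking it before (PE1)--(PE4) are established would be circular; the identity above makes any such appeal unnecessary.
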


\begin{proof}
Let $a,b,c\in P$.\\
(PE1): by axiom (5) we have $(a\oplus b)\oplus c=((a\oplus b)^{\sim}\circ c^{\sim})^-=((a^{\sim}\circ b^{\sim})\circ c^{\sim})^-=(a^{\sim}\circ (b^{\sim}\circ c^{\sim}))^-=(a^{\sim}\circ (b\oplus c)^{\sim})^-=a\oplus (b\oplus c)$.\\
(PE2): At first we prove that $a\oplus a^{\sim}=a^-\oplus a=1$. Indeed, $a\oplus a^{\sim}=(a\ast a^-)^{\sim}$ and $(a\ast a^-)^{\sim}=1$ iff $a\ast a^-=0$ and this is ensured by Lemma \ref{le:1} (a). Similarly $a^-\oplus a=(a\circ a^{\sim})^-=1$. Now we show that if $a\oplus b=1$ then $b=a^{\sim}$ and if $b\oplus a=1$ then $b=a^-$. So let us have $a\oplus b=1, a\leq b^-$. Then $1=a\oplus b=(a^{\sim}\circ b^{\sim})^-$ so $a^{\sim}\circ b^{\sim}=0$ and by Lemma \ref{le:1} (c), $a^{\sim}\leq b$. Together with $a\leq b^-$ ($b\leq a^{\sim}$) we get $a^{\sim}=b$. If now $b\oplus a=1$, $b\leq a^-$, then $1=b\oplus a=(a^-\ast b^-)^{\sim}$, and so $a^-\ast b^-=0$. By Lemma \ref{le:1} (c), $a^-\leq b$, therefore $a^-=b$.\\
(PE 3):  We have $c:=a\oplus b=(a^{\sim}\circ b^{\sim})^-\geq a$ and $a\oplus b=(b^-\ast a^-)^{\sim}\geq b$ by Lemma \ref{le:1} (d). Applying Lemma \ref{le:subtract} yields $c=b\oplus b\diagup c$ and
$c=c\diagdown a\oplus a$.  Putting $d:=b\diagup c$ and $e:=c\diagdown a$ yields the existence of the elements $d$ and $e$. To show uniqueness, we consider $d,d_1$ such that $d\oplus a=d_1\oplus a$. Then $(d\oplus a)\oplus f=(d_1\oplus a)\oplus f=1$ for some $f\in P$ and by associativity $d\oplus (a\oplus f)=d_1\oplus (a\oplus f)=1$. By uniqueness of the left complement ((PE2)) we get $d=d_1$ and similar result we obtain for the right complement.\\
(PE4): If $a\oplus 1$ or $1\oplus a$ exist, then by the definition of $\oplus$ (Def. \ref{de:oplus}) $a\leq 1^-$ or $a\leq 1^{\sim}$.\\
Now we have a PEA and we moreover show that it is a lattice. Divisibility yields the existence of infima.
Using pseudoinvolution, we obtain $a\vee b=(a^-\wedge b^-)^{\sim}=(a^{\sim}\wedge b^{\sim})^-$.
\end{proof}

\section{From LPEA to pseudo Sasaki Algebra}

The following two operations on a lattice pseudo-effect algebra $(P;\oplus,0,1)$ were introduced in \cite{PuSas} as a generalization of the Sasaki product in LEAs:

\begin{definition}\label{de:circast}
$a\circ b:=a\wedge b^-\diagup a$; $\;\; a\ast b:=a\diagdown a\wedge b^{\sim}$
\end{definition}

\begin{theorem}\label{th:plus}
Under the previous definition we can write $a\oplus b$ as $(b^-\ast a^-)^{\sim}$ or $(a^{\sim}\circ b^{\sim})^-$.
\end{theorem}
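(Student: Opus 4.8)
The plan is to unwind Definition~\ref{de:circast} under the standing hypothesis $a\le b^-$ (equivalently $b\le a^\sim$), to collapse the lattice meets that appear using that hypothesis, and then to read off each of the two compound expressions as a single partial subtraction to which Lemma~\ref{le:minus} applies. I will also use the involution identities $(x^-)^\sim=x=(x^\sim)^-$; these follow at once from (PE2), since $a\oplus a^\sim=1=a^-\oplus a$ and the uniqueness clause of (PE2) then forces $(a^\sim)^-=a$ and $(a^-)^\sim=a$.

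For the expression $(a^\sim\circ b^\sim)^-$: by Definition~\ref{de:circast}, $a^\sim\circ b^\sim=(a^\sim\wedge(b^\sim)^-)\diagup a^\sim=(a^\sim\wedge b)\diagup a^\sim$, using $(b^\sim)^-=b$. Since $b\le a^\sim$ the meet is just $b$, so $a^\sim\circ b^\sim=b\diagup a^\sim=:c$, which by the definition of $\diagup$ means $a^\sim=b\oplus c$; in particular the sum $b\oplus c$ exists, so $b\le c^-$. Applying the left complement to $a^\sim=b\oplus c$ and invoking $(a^\sim)^-=a$ together with Lemma~\ref{le:minus}(c) (which gives $(b\oplus c)^-=c^-\diagdown b$), we obtain $a=c^-\diagdown b$, i.e. $c^-=a\oplus b$. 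Hence $(a^\sim\circ b^\sim)^-=c^-=a\oplus b$.

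The identity $(b^-\ast a^-)^\sim=a\oplus b$ is the mirror image. By Definition~\ref{de:circast}, $b^-\ast a^-=b^-\diagdown(b^-\wedge(a^-)^\sim)=b^-\diagdown(b^-\wedge a)$, and since $a\le b^-$ the meet is $a$, so $b^-\ast a^-=b^-\diagdown a=:d$, which means $b^-=d\oplus a$ and hence $d\le a^-$. Applying the right complement and using $(b^-)^\sim=b$ together with Lemma~\ref{le:minus}(d) ($(d\oplus a)^\sim=a\diagup d^\sim$), we get $b=a\diagup d^\sim$, i.e. $d^\sim=a\oplus b$. Combining the two computations yields $a\oplus b=(a^\sim\circ b^\sim)^-=(b^-\ast a^-)^\sim$.

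There is essentially no conceptual obstacle here: the argument is a definition-chase. The only point that needs care is the bookkeeping of the two partial-subtraction conventions ("right" $\diagdown$ versus "left" $\diagup$) together with the hypotheses of Lemma~\ref{le:minus}(c),(d), and checking that every partial operation invoked is actually defined. But all the needed inequalities ($a\le b^-$, $b\le a^\sim$, $b\le c^-$, $d\le a^-$, as well as $a^\sim\wedge b\le a^\sim$ and $b^-\wedge a\le b^-$) are immediate from the standing hypothesis and from the fact that the subtractions in question are extracted from genuine $\oplus$-sums, so no separate verification is troublesome.
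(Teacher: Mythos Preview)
Your argument is correct and is essentially the same definition chase as the paper's: both collapse the meets in Definition~\ref{de:circast} using $a\le b^-$ (equivalently $b\le a^\sim$) and then invoke Lemma~\ref{le:minus}(c),(d). The only cosmetic difference is that the paper applies Lemma~\ref{le:minus} directly to $a\oplus b$ (obtaining $(a\oplus b)^-=b^-\diagdown a=b^-\ast a^-$ and $(a\oplus b)^\sim=b\diagup a^\sim=a^\sim\circ b^\sim$ in one line each), whereas you introduce the auxiliary elements $c,d$ and apply the lemma to the sums $b\oplus c=a^\sim$ and $d\oplus a=b^-$; this is slightly longer but equivalent.
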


\begin{proof}
If $a\oplus b$ exists, then $a\leq b^-$ and $b\leq a^{\sim}$. By Lemma \ref{le:minus},\\
$$(a\oplus b)^-=b^-\diagdown a=b^-\diagdown a\wedge b^-=b^-\ast a^-,$$
hence $a\oplus b=(b^-\ast a^-)^{\sim}$.

Similarly $(a\oplus b)^{\sim}=b\diagup a^{\sim}=b\wedge a^{\sim}\diagup a^{\sim}=a^{\sim}\circ b^{\sim}$, hence $a\oplus b =(a^{\sim}\circ b^{\sim})^-$.

\end{proof}

\begin{lemma}
$b\leq c$ implies $a\circ b\leq a\circ c$ and $a\ast b\leq a\ast c$.
\end{lemma}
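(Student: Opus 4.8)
The statement asserts monotonicity of $\circ$ and $\ast$ in the second argument, using the LPEA definitions $a\circ b=a\wedge b^-\diagup a$ and $a\ast b=a\diagdown a\wedge b^{\sim}$. The plan is to work directly from these definitions, exploiting the order-reversing behaviour of the complements $^-$ and $^\sim$ together with the monotonicity of the partial subtractions $\diagup$ and $\diagdown$ on the domains where they are defined.

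First I would treat $a\circ b$. Suppose $b\leq c$. In an LPEA the complements are order-reversing, so $c^-\leq b^-$, and hence $a\wedge c^-\leq a\wedge b^-$; call these elements $u:=a\wedge c^-$ and $v:=a\wedge b^-$, so $u\leq v\leq a$. Now $a\circ b=v\diagup a$ and $a\circ c=u\diagup a$, where $x\diagup a$ is the unique element with $a\oplus(x\diagup a)=x$ — wait, more precisely $a\diagup x$ is defined by $x=a\oplus(a\diagup x)$; here since $u,v\leq a$ we have $v\diagup a$ undefined in that direction, so I must be careful and instead use the correct reading: $a\circ b = a\wedge b^- \diagup a$ means the left subtraction-of-$a$ is being applied, i.e. $t\diagup a$ with $t=a\wedge b^-$, but $t\leq a$, so rather this should be $a \diagup t$ is wrong too. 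The operative fact, established essentially in Theorem \ref{th:plus} and Lemma \ref{le:minus}, is that $\diagup$ is monotone in its defined argument: if $u\leq v$ and both $u\diagup a$, $v\diagup a$ are defined (equivalently $a\leq u\leq v$ in the appropriate order), then $u\diagup a\leq v\diagup a$; the analogous statement holds for $\diagdown$. So the real structure of the argument is: $\circ$ is a composite of an order-reversing map ($^-$), an order-preserving map ($a\wedge(-)$), and an order-preserving-on-its-domain map (the subtraction), with the order-reversing map applied to $b$ — but since $b\mapsto b^-$ is order-reversing and $c^-\leq b^-$, I would re-examine: $a\wedge c^-\leq a\wedge b^-$ goes the \emph{wrong} way, so I must instead invoke Theorem \ref{th:CIS} and Theorem \ref{th:PSA-PEA} showing this LPEA-with-$\circ,\ast$ is a pseudo Sasaki algebra, hence a double CI-lattice, and then the result is precisely Theorem \ref{th:prop}(vi).

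Thus the cleanest route, and the one I would actually carry out, is: invoke the correspondence already built in Sections 3 and 4. By Theorems \ref{th:plus}, \ref{th:PSA-PEA} and \ref{th:CIS}, the structure $(P;^-,^\sim,\circ,\ast,0,1)$ arising from an LPEA via Definition \ref{de:circast} is a pseudo Sasaki algebra, and every pseudo Sasaki algebra is a double CI-lattice with $a\rightarrow b=(a\circ b^\sim)^-$, $a\leadsto b=(a\ast b^-)^\sim$. Then the desired monotonicity $b\leq c\Rightarrow a\circ b\leq a\circ c$, $a\ast b\leq a\ast c$ is exactly Theorem \ref{th:prop}(vi), which follows from part (iv) of that theorem (preservation of suprema by $\circ$ and $\ast$) via $a\circ c=a\circ(b\vee c)=(a\circ b)\vee(a\circ c)\geq a\circ b$.

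The main obstacle — and the reason a direct computation is tempting but treacherous — is precisely the sign bookkeeping: the map $b\mapsto a\wedge b^-$ is order-reversing, so one cannot naively chain inequalities, and one must pass through residuation (where $\circ$ is a residuated map and hence automatically join-preserving and monotone) rather than manipulating $\wedge$ and $\diagup$ by hand. Once the identification with the double CI-lattice is in place this is immediate; I would therefore state the lemma's proof as a one-line appeal to Theorem \ref{th:prop}(vi), having first recorded that Definition \ref{de:circast} yields a pseudo Sasaki algebra (equivalently a double CI-lattice) on any LPEA.
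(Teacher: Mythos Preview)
Your proposed route is circular. The lemma sits \emph{before} Theorem~\ref{th:PEA-PSA} in the paper precisely because that theorem needs it: in verifying self-adjointness (axiom~(3)) the proof of Theorem~\ref{th:PEA-PSA} writes ``By previous lemma we also have $a\ast c^-\leq a\ast (a\wedge b^-\diagup a)^-$'', which is exactly this monotonicity statement. So you cannot appeal to the fact that the LPEA is a pseudo Sasaki algebra (hence a double CI-lattice, hence Theorem~\ref{th:prop}(vi) applies) in order to prove the lemma---that identification has not yet been established at this point in the argument, and establishing it uses the lemma.

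The direct computation you abandoned is in fact the paper's proof, and your ``obstacle'' is illusory. Yes, $b\mapsto a\wedge b^-$ is order-reversing, so $b\leq c$ gives $a\wedge c^-\leq a\wedge b^-$. But the map $x\mapsto x\diagup a$ on $[0,a]$ is \emph{also} order-reversing: from $a=x\oplus(x\diagup a)$ and cancellation in a PEA, $x_1\leq x_2\leq a$ forces $x_2\diagup a\leq x_1\diagup a$. Composing two antitone maps gives a monotone one, and so
\[
a\circ b=(a\wedge b^-)\diagup a\ \leq\ (a\wedge c^-)\diagup a=a\circ c.
\]
The argument for $\ast$ is symmetric, using that $x\mapsto a\diagdown x$ is antitone on $[0,a]$. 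No residuation or CI-lattice machinery is needed.
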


\begin{proof}
$b\leq c$ implies $c^-\leq b^-$ and $c^{\sim}\leq b^{\sim}$. Then also $a\wedge c^-\leq a\wedge b^-$ and $a\wedge c^{\sim}\leq a\wedge b^{\sim}$. Therefore $a\circ b=a\wedge b^-\diagup a\leq a\wedge c^-\diagup a=a\circ c$ and $a\ast b=a\diagdown a\wedge b^{\sim}\leq a\diagdown a\wedge c^{\sim}=a\ast c$.
\end{proof}

\begin{theorem}\label{th:PEA-PSA} Let $(P;\oplus,0,1)$ be a lattice pseudo-effect algebra. With operations $\circ$ and $\ast$ defined as in Definition \ref{de:circast}, $(P;^-,^{\sim},\circ,\ast,0,1)$ becomes a pseudo Sasaki algebra.
\end{theorem}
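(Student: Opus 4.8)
The plan is to verify the six axioms of Definition~\ref{de:psSas} one at a time for the structure $(P;{}^-,{}^{\sim},\circ,\ast,0,1)$ arising from a lattice pseudo-effect algebra via Definition~\ref{de:circast}, making heavy use of the arithmetic of $\oplus$, $\diagdown$, $\diagup$ recorded in Lemma~\ref{le:minus} and of Theorem~\ref{th:plus}, which already identifies $a\oplus b=(a^{\sim}\circ b^{\sim})^-=(b^-\ast a^-)^{\sim}$ whenever $a\le b^-$. Axioms (1) and (2) are essentially immediate: the pseudo-involution property $a^{-\sim}=a^{\sim-}=a$ together with order-reversal is standard PEA arithmetic, and unity, $1\circ a=1\wedge a^-\diagup 1=a^-\diagup 1=a^{-\sim}=a$ with the three companion identities, falls out of the definitions of $\circ,\ast$ and of $\diagdown,\diagup$ in the degenerate cases. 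So the real content is axioms (3)--(6).

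For the self-adjointness axiom (3), I would unwind $a\circ b\le c$ using $a\circ b=a\wedge b^-\diagup a$ and the fact that, in a PEA, $x\diagup a\le c$ is equivalent to a subtraction relation that can be re-expressed with Lemma~\ref{le:minus}(a),(b); the cleanest route is probably to show both sides of ``$a\circ b\le c \iff a\ast c^-\le b^-$'' are equivalent to the single statement ``$a\wedge b^-\le$ (the complement computation of $c$ relative to $a$)'', so that the two inequalities coincide by symmetry of the roles of ${}^-$ and ${}^{\sim}$ together with Theorem~\ref{th:plus}. The second line of (3) is the mirror identity and follows by the same manipulation with left/right swapped. For the divisibility axiom (4), the key observation is that, under Definition~\ref{de:circast}, $a\circ(a\ast b^-)^{\sim}$ and $a\ast(a\circ b^{\sim})^-$ should both reduce to $a\wedge b$: one computes $a\ast b^- = a\diagdown(a\wedge b^{-\sim})=a\diagdown(a\wedge b)$, then $(a\diagdown(a\wedge b))^{\sim}$, then feeds this into $\circ$, and a short $\oplus$-cancellation using $(a\wedge b)\le a$ and Lemma~\ref{le:subtract}-style reasoning collapses it to $a\wedge b$; hence the implication in (4) holds (indeed with equality), which is exactly the divisibility law of Definition~\ref{de:pisad}(ii).

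Axioms (5) and (6) are where I expect the real work to be, and (5) is the main obstacle. Here one must show that when $a\le b^-$ and $c\le a^{\sim}\circ b^{\sim}$, the two triple products $(a^{\sim}\circ b^{\sim})\circ c^{\sim}$ and $a^{\sim}\circ(b^{\sim}\circ c^{\sim})$ agree. The natural strategy is to translate everything back into $\oplus$: by Theorem~\ref{th:plus}, $a^{\sim}\circ b^{\sim}=(a\oplus b)^{\sim}$ when $a\le b^-$, so the hypothesis $c\le(a\oplus b)^{\sim}$ says precisely that $(a\oplus b)\oplus c^{\sim\sim}=(a\oplus b)\oplus c$... more carefully, that $a\oplus b$ and $(a\oplus b)\oplus(\text{the relevant summand})$ are defined; then $(a^{\sim}\circ b^{\sim})\circ c^{\sim}=((a\oplus b)^{\sim}\circ c^{\sim})^-{}^{\sim}$-type rewriting turns the claimed associativity into the associativity axiom (PE1) for $\oplus$, which holds in $P$ by hypothesis. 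One has to be careful that the domain conditions match up — that $c\le a^{\sim}\circ b^{\sim}$ is exactly the condition needed for the relevant $\oplus$-sums to exist — and the second half of (5) is the left-handed mirror, handled identically. Finally, axiom (6), $(a^{\sim}\circ b^{\sim})^-=(b^-\ast a^-)^{\sim}$ for $a\le b^-$, is immediate from Theorem~\ref{th:plus}, since both sides equal $a\oplus b$. Assembling these, $(P;{}^-,{}^{\sim},\circ,\ast,0,1)$ satisfies all of (1)--(6) and is therefore a pseudo Sasaki algebra; one may also remark that, by Theorems~\ref{th:PSA-PEA} and~\ref{th:plus}, the $\oplus$ recovered from this pseudo Sasaki algebra via Definition~\ref{de:oplus} coincides with the original one, so the two constructions are mutually inverse.
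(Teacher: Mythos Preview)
Your plan is correct and follows essentially the same route as the paper: axioms (1) and (2) are immediate, axiom (4) is obtained by computing $a\circ(a\ast b^-)^{\sim}=a\ast(a\circ b^{\sim})^-=a\wedge b$, axiom (5) is reduced via Theorem~\ref{th:plus} to the associativity (PE1) of $\oplus$, and axiom (6) is a direct restatement of Theorem~\ref{th:plus}. The only point where the paper's argument is crisper than your sketch is axiom (3): rather than searching for a single intermediate statement equivalent to both sides, the paper uses the right-monotonicity of $\circ$ and $\ast$ (the lemma just before Theorem~\ref{th:PEA-PSA}) to feed one inequality into the other --- from $a\circ b\le c$ one gets $a\ast c^-\le a\ast(a\circ b)^-$, and a direct computation shows $a\ast(a\circ b)^- = a\diagdown\bigl((a\wedge b^-)\diagup a\bigr)=a\wedge b^-\le b^-$; the converse is symmetric. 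Your Lemma~\ref{le:minus} manipulations would eventually reach the same place, but monotonicity is the cleaner lever here.
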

\begin{proof} We have to prove axioms (1) -(6) of pseudo Sasaki algebra.
(1): This follows from the definitions of complements and partial order in PEA.\\
(2): $$a\circ 1=a\wedge 1^-\diagup a=a\wedge 0\diagup a=a$$
$$1\circ a=1\wedge a^-\diagup 1=a^-\diagup 1=a$$
$$a\ast 1=a\diagdown a\wedge 1^{\sim}=a\diagdown a\wedge 0=a$$
$$1\ast a=1\diagdown 1\wedge a^{\sim}=1\diagdown a^{\sim}=a$$
(3): Let us first have $a\circ b\leq c$. Then $c^-\leq (a\circ b)^-=(a\wedge b^-\diagup a)^-$. By previous lemma we also have $a\ast c^-\leq a\ast (a\wedge b^-\diagup a)^-=a\diagdown(a\wedge b^-\diagup a)=a\wedge b^-\leq b^-$. On the other hand if $a\ast c^-\leq b^-$ and therefore $b\leq (a\ast c^-)^{\sim}$, then $a\circ b\leq a\circ (a\ast c^-)^{\sim}=a\circ (a\diagdown a\wedge c)^{\sim}=(a\diagdown a\wedge c)\diagup a=a\wedge c\leq c$. The remaining equality can be proved in the same way.\\
(4): Let $c\leq a, c\leq b$. We show that $a\circ (a\ast b^-)^{\sim}=a\ast (a\circ b^{\sim})^-=a\wedge b$. Indeed, $a\circ (a\ast b^-)^{\sim}=a\circ (a\diagdown a\wedge b)^{\sim}=(a\diagdown a\wedge b)\diagup a=a\wedge b$ and $a\ast (a\circ b^{\sim})^-=a\ast (a\wedge b\diagup a)^-=a\diagdown (a\wedge b\diagup a)=a\wedge b$. Therefore $c\leq a\circ (a\ast b^-)^{\sim}=a\ast (a\circ b^{\sim})^-$.\\
(5): PEA is associative and so $a\oplus (b\oplus c)=(a\oplus b)\oplus c$. Then $a\oplus (b^{\sim}\circ c^{\sim})^-=(a^{\sim}\circ b^{\sim})^-\oplus c$ and $(a^{\sim}\circ (b^{\sim}\circ c^{\sim}))^-=((a^{\sim}\circ b^{\sim})\circ c^{\sim})^-$ so that finally $a^{\sim}\circ (b^{\sim}\circ c^{\sim})=(a^{\sim}\circ b^{\sim})\circ c^{\sim}$. Similarly we prove the second associativity condition.\\
(6) follows from Theorem \ref{th:plus}.
\end{proof}

\begin{remark}\label{re:pseudocom}

Two elements $a,b$ in a lattice pseudo-effect algebra are \emph{compatible} iff $(a\vee b)\diagdown a=b\diagdown (a\wedge b)$ and $(a\vee b)\diagdown b=a\diagdown(a\wedge b)$, or equivalently, iff $(a\wedge b)\diagup a=b\diagup(a\vee b)$ and $(a\wedge b)\diagup b=a\diagup(a\vee b)$ \cite[Proposition 3.6]{DvVe3}.

We say that elements $a$ and $b$ of a pseudo Sasaki algebra \emph{pseudocommute} ($a\leftrightarrow b$) if
$$b\ast a^-=a^-\circ b \mbox{ and } a\ast b^-=b^-\circ a$$
or equivalently
$$a\circ b^{\sim}=b^{\sim}\ast a \mbox{ and } b\circ a^{\sim}=a^{\sim}\ast b$$

\begin{theorem}
Elements $a,b$ in a lattice pseudo-effect algebra are compatible if and only if they pseudocummute in the corresponding pseudo Sasaki algebra.
\end{theorem}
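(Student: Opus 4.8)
The plan is to unwind both sides of the claimed equivalence into statements purely about the partial $\oplus$ operation and the two subtractions, and then match them. Recall that compatibility of $a,b$ in the LPEA says $(a\vee b)\diagdown a = b\diagdown(a\wedge b)$ together with $(a\vee b)\diagdown b = a\diagdown(a\wedge b)$ (equivalently the two $\diagup$ identities), while pseudocommutativity says $b\ast a^- = a^-\circ b$ and $a\ast b^- = b^-\circ a$. So the first step is to compute, using Definition \ref{de:circast} and the identities $a\vee b = (a^-\wedge b^-)^{\sim} = (a^{\sim}\wedge b^{\sim})^-$, that $a^-\circ b = a^-\wedge b^-\diagup a^-$ and $b\ast a^- = b\diagdown b\wedge a$. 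These are already expressed via subtractions; the goal is to show $a^-\wedge b^-\diagup a^- = b\diagdown(a\wedge b)$ is equivalent to one of the compatibility equations, via Lemma \ref{le:minus}.

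Concretely, I would use Lemma \ref{le:minus}(b), which gives $x\diagdown y = x^-\diagup y^-$ whenever $y\le x$. Apply it with $x = a\vee b$ and $y = a$: since $(a\vee b)^- = a^-\wedge b^-$, we get $(a\vee b)\diagdown a = (a^-\wedge b^-)\diagup a^-$, which is exactly $a^-\circ b$ as computed above. On the other side, Definition \ref{de:circast} gives $b\ast a^- = b\diagdown(b\wedge (a^-)^{\sim}) = b\diagdown(b\wedge a) = b\diagdown(a\wedge b)$. Hence the first pseudocommutativity equation $b\ast a^- = a^-\circ b$ translates verbatim into $(a\vee b)\diagdown a = b\diagdown(a\wedge b)$, which is the first compatibility equation. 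By the symmetric computation (swapping the roles of $a$ and $b$, i.e.\ using $a^-\wedge b^- = b^-\wedge a^-$ and Lemma \ref{le:minus}(b) with $y=b$), the second pseudocommutativity equation $a\ast b^- = b^-\circ a$ translates into $(a\vee b)\diagdown b = a\diagdown(a\wedge b)$. Putting the two together establishes the equivalence, and the ``equivalently'' clauses on each side correspond to each other through Lemma \ref{le:minus}(a), so one does not need to re-do the $\diagup$-versions separately.

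The main obstacle I anticipate is purely bookkeeping: one must be careful that the relevant sums and differences are actually defined before manipulating them — e.g.\ that $a^-\wedge b^- \le a^-$ (trivial) so $(a^-\wedge b^-)\diagup a^-$ makes sense, and that $a\wedge b \le b$ so $b\diagdown(a\wedge b)$ makes sense, and that the intermediate expression $b\wedge (a^-)^{\sim}$ in Definition \ref{de:circast} genuinely reduces to $b\wedge a$ using $(a^-)^{\sim} = a$ from axiom (1). There is also the mild subtlety that $\circ$ and $\ast$ here are the derived operations of the pseudo Sasaki algebra obtained from the LPEA via Theorem \ref{th:PEA-PSA}, so I should cite Definition \ref{de:circast} (not the abstract residuation) when expanding $a^-\circ b$ and $b\ast a^-$. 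Once the definedness conditions are noted, the proof is a short chain of substitutions; I would present it as two displayed computations, one for each coordinate, followed by the remark that the alternative forms of compatibility and of pseudocommutativity match up via Lemma \ref{le:minus}(a).
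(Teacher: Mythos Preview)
Your proposal is correct and follows essentially the same route as the paper: identify $a^-\circ b$ with $(a\vee b)\diagdown a$ via Lemma~\ref{le:minus}(b) and $b\ast a^-$ with $b\diagdown(a\wedge b)$ directly from Definition~\ref{de:circast}, then match the pseudocommutativity equations with the compatibility equations coordinate by coordinate. The only cosmetic difference is that the paper verifies one $\diagdown$-identity and one $\diagup$-identity explicitly (computing $(a\wedge b)\diagup a=a\circ b^{\sim}$ and $b\diagup(a\vee b)=b^{\sim}\ast a$ via Lemma~\ref{le:minus}(a)), whereas you handle both $\diagdown$-coordinates by the $a\leftrightarrow b$ symmetry and defer the $\diagup$-form to the stated equivalences; either organization works.
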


\begin{proof} By Lemma \ref{le:minus}, $(a\vee b)\diagdown a=a^-\wedge b^-\diagup a^-=a^-\circ b$,
and $b\diagdown(a\wedge b)=b\ast a^-$. Similarly, $(a\wedge b)\diagup a=a\circ b^{\sim}$, while $b\diagup(a\vee b)=b^{\sim}\diagdown(a^{\sim}\wedge b^{\sim})=b^{\sim}\ast a$. From this we derive the desired equivalences.
\end{proof}

Non-commutative generalizations of MV-algebras were introduced in \cite{GeIo} as pseudo-MV-algebras, and in \cite{Rac} as generalized MV-algebras. These definitions are equivalent. In \cite[Theorem 8.7]{DvVe2}, it was shown that a pseudo MV-algebra is a lattice pseudo-effect algebra where all pairs of elements are  compatible.
This gives the following characterization of pseudo-MV-algebras.

\begin{theorem}
A Sasaki algebra (LPEA) $P$ is a pseudo MV-algebra, if $a\ast b=b\circ a\;\; \forall a,b\in P$.
\end{theorem}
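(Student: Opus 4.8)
The plan is to combine two facts already at hand: the characterization in \cite[Theorem 8.7]{DvVe2} of pseudo MV-algebras as precisely those lattice pseudo-effect algebras in which every pair of elements is compatible, and the preceding theorem identifying compatibility of $a,b$ in an LPEA with pseudocommutativity $a\leftrightarrow b$ in the associated pseudo Sasaki algebra. So it suffices to show that the identity $a\ast b=b\circ a$ forces all pairs to pseudocommute.

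First I would recall, from Remark \ref{re:pseudocom}, that $a\leftrightarrow b$ means exactly $b\ast a^-=a^-\circ b$ and $a\ast b^-=b^-\circ a$. Now assume the identity $x\ast y=y\circ x$ holds for all $x,y\in P$. Specializing it to $x:=b$, $y:=a^-$ gives $b\ast a^-=a^-\circ b$, and specializing to $x:=a$, $y:=b^-$ gives $a\ast b^-=b^-\circ a$. Thus both defining equalities of pseudocommutativity hold, so an arbitrary pair $a,b\in P$ satisfies $a\leftrightarrow b$. By the preceding theorem, every pair of elements of the lattice pseudo-effect algebra $P$ is then compatible, and by \cite[Theorem 8.7]{DvVe2} this says that $P$ is a pseudo MV-algebra.

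There is essentially no obstacle here; the only point requiring care is keeping the two conjunctions $\circ,\ast$ and the two complements $^-,^{\sim}$ correctly paired, since the non-commutativity makes it easy to apply the hypothesis to the wrong arguments. I would also note, for completeness, that the converse holds too: if $P$ is a pseudo MV-algebra then all pairs are compatible, so in particular $a\leftrightarrow b^{\sim}$ for all $a,b$; reading off the second equality of pseudocommutativity for the pair $a,b^{\sim}$ and using the pseudo-involution identity $(b^{\sim})^-=b$ yields $a\ast b=b\circ a$. Hence the stated condition is in fact equivalent to $P$ being a pseudo MV-algebra, should one wish to upgrade the statement to an ``if and only if''.
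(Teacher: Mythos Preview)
Your argument is correct and matches the paper's intended reasoning: the paper does not spell out a formal proof but simply says that the cited result \cite[Theorem 8.7]{DvVe2} together with the preceding compatibility/pseudocommutativity theorem ``gives'' this characterization, which is exactly the chain of implications you wrote down. Your added converse (via $a\leftrightarrow b^{\sim}$ and $(b^{\sim})^-=b$) is a nice bonus that the paper omits.
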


\end{remark}

\begin{remark}\label{re:weaklycom} There exist non-commutative lattice pseudo-effect algebras such that
$a^{\sim}=a^-$ \cite{Rac}. Such algebras are sometimes connected with cyclically ordered unital groups in the sense of Rieger \cite{Ri, Fuc}.  This class of LPEAs can be characterized by $a\rightarrow 0=a\leadsto 0$.

Notice that a lattice effect algebras can be characterized by $a\rightarrow b=a\leadsto b$ whenever $b\leq a$. Indeed, then $(a\circ b^{\sim})^-=(a\ast b^-)^{\sim}$, whence $a^-\oplus b=(a^{- \sim}\circ b^{\sim})^-=(a^{\sim -}\ast b^-)^{\sim}=b\oplus a^{\sim}$, and $a^-=a\rightarrow 0=a\leadsto 0=a^{\sim}$.
\end{remark}

\section{Pseudo-effect algebras as conditional double CI-posets}

\begin{definition}\label{de:cdci} An algebraic system $(R;\circ, \ast,\rightarrow, \leadsto,0,1)$ is called a \emph{conditional double CI-poset} iff the following axioms are satisfied:
\begin{enumerate}
\item[(a)] $x\rightarrow y$ is defined iff $y\leq x$\\
$x\leadsto y$ is defined iff $y\leq x$\\
$x\circ y$ is defined iff $y\rightarrow 0\leq x$\\
$x\ast y$ is defined iff $y\leadsto 0\leq x$
\item[(b)] $x\circ 1=1\circ x=x\ast 1=1\ast x=x$
\item[(c)] if $x\circ y$ and $x\leadsto z$ is defined, then $x\circ y\leq z$ iff $y\leq x\leadsto z$\\
if $x\ast y$ and  $x\rightarrow z$ is defined, then $x\ast y\leq z$ iff $y\leq x\rightarrow z$
\end{enumerate}

We say that a conditional double CI-poset satisfies
\begin{enumerate}
\item[(d)] \emph{pseudoinvolution} iff $x\leq y$ implies  $y^-\leq x^-$ and  $y^{\sim}\leq x^{\sim}$  and $x^{- \sim}=x=x^{\sim -}$,
where $x^-=x\rightarrow 0$, $x^{\sim}:=x\leadsto 0$
\item[(e)] \emph{divisibility} iff $x\leq y$ $\Leftrightarrow$  $x=y\circ (y\leadsto x)=y\ast(y\rightarrow x)$
\item[(f)] \emph{associativity} iff $(x\circ y)\circ z=x\circ (y\circ z)$, $(x\ast y)\ast z=x\ast (y\ast z)$  in the sense that if one side is defined so is the other and equality holds
\item[(g)] \emph{pseudo-effect algebra condition} iff $(y^-\ast x^-)^{\sim}=(x^{\sim}\circ y^{\sim})^-$
\end{enumerate}
\end{definition}

\begin{theorem}\label{th:CI-pea} Let $(R;\circ, \ast,\rightarrow, \leadsto,0,1)$ be a conditional CI-poset satisfying additional conditions (d)-(g). Define $x\oplus y=(y^-\ast x^-)^{\sim}=(x^{\sim}\circ y^{\sim})^-$, which is defined iff $x\leq y^-$. Then $P(R)=(R;\oplus, 0,1)$ is a pseudo-effect algebra. Moreover, the partial order induced by $\oplus$ coincides with the initial order $\leq$.
\end{theorem}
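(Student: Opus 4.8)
The plan is to verify the four pseudo-effect algebra axioms (PE1)--(PE4) for $\oplus$ directly, by translating them into statements about the double CI-poset operations $\circ$, $\ast$, $\rightarrow$, $\leadsto$ and the complements $^-$, $^{\sim}$, and then invoking conditions (d)--(g). A useful preliminary observation is that the definition of $\oplus$ makes sense: since $x\oplus y$ is declared defined iff $x\leq y^-$, and by pseudoinvolution $x\leq y^-\Leftrightarrow y\leq x^{-\sim\sim}$ wait --- more carefully, $x\leq y^-$ is equivalent to $y = y^{-\sim} \leq x^\sim$ is NOT automatic; rather from (d), $x\le y^-$ implies $y^{-}{}^{-}\le x^-$ so $y^-\le x^-$ gives nothing --- instead use: $x\leq y^-$ iff (applying $^\sim$, order-reversing) $y^{-\sim}\le x^\sim$, i.e. $y\le x^\sim$. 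So $x\oplus y$ defined $\Leftrightarrow x\leq y^-\Leftrightarrow y\leq x^\sim$, which matches the domains in (a) for both $x^\sim\circ y^\sim$ (defined iff $y^{\sim\sim}=y\le x^\sim$, i.e. $y\le x^\sim$) and $y^-\ast x^-$ (defined iff $x^{--}=x\le y^-$). First I would record these domain equivalences, together with the consequences of Lemma-type facts that in this conditional setting still go through: from (b) and (c) one gets $a\circ b = 0\Leftrightarrow b\le a^\sim$ and $a\ast b=0\Leftrightarrow b\le a^-$ (whenever the products are defined), and $a^\sim\circ a$, $a\ast a^-$ etc.\ equal $0$ --- these mirror Lemma~\ref{le:1}(a),(c).

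Next I would handle the axioms one at a time. For \textbf{(PE2)}: I claim $a\oplus a^\sim = a^-\oplus a = 1$ and these are the unique such complements. Definedness: $a\le a^{\sim-}=a$ holds, and $a^-\le a^-$ holds. Then $a\oplus a^\sim = (a^\sim\circ a^{\sim\sim})^- = (a^\sim\circ a)^- = 0^- = 1$ using $a^\sim\circ a = 0$; similarly $a^-\oplus a = (a^{-\sim}\circ a^\sim)^- = (a\circ a^\sim)^- = 0^- = 1$. Uniqueness: if $a\oplus b = 1$ then $(a^\sim\circ b^\sim)^- = 1$, so $a^\sim\circ b^\sim = 0$, so $b^\sim \le a^{\sim\sim} = a$, i.e.\ $a^\sim \ge b^{\sim\sim}$... rather $b^\sim\le a$ gives, applying $^\sim$, $a^\sim\le b^{\sim\sim}=b$; combined with $b\le a^\sim$ (from definedness) we get $b = a^\sim$. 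The left version is symmetric using the $(y^-\ast x^-)^\sim$ form. For \textbf{(PE4)}: if $a\oplus 1$ is defined then $a\le 1^- = 0$ so $a=0$; if $1\oplus a$ is defined then $a\le 1^\sim = 0$... wait, $1\oplus a$ defined iff $1\le a^-$, i.e.\ $a^-=1$, i.e.\ $a=0$ by the analogue of Lemma~\ref{le:2.3}(ii). Either way $a=0$.

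For \textbf{(PE1)}, the associativity axiom: this is where condition (f) (associativity of $\circ$ and $\ast$) and condition (g) (the pseudo-effect algebra condition) do the work, exactly as in the proof of Theorem~\ref{th:PSA-PEA}. I would compute, whenever the relevant sums are defined, $(a\oplus b)\oplus c = ((a\oplus b)^\sim\circ c^\sim)^- = ((a^\sim\circ b^\sim)\circ c^\sim)^- = (a^\sim\circ(b^\sim\circ c^\sim))^- = (a^\sim\circ(b\oplus c)^\sim)^- = a\oplus(b\oplus c)$, where the crucial middle step is (f) and where I use (g)/the two-form definition of $\oplus$ to rewrite $(a\oplus b)^\sim = a^\sim\circ b^\sim$ and $(b\oplus c)^\sim = b^\sim\circ c^\sim$. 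The genuinely delicate point --- and what I expect to be the \textbf{main obstacle} --- is the matching of \emph{domains of definition}: one must show that $(a\oplus b)\oplus c$ is defined iff $b\oplus c$ and $a\oplus(b\oplus c)$ are defined. This requires chasing the partiality conditions in (a) through the identities: e.g.\ $(a\oplus b)\oplus c$ defined means $a\oplus b$ defined and $(a\oplus b)^\sim\le ?$... one shows definedness of the left-associated sum is equivalent to $a\le b^-$, $b\le c^-$, and $a\oplus b\le c^-$, and symmetrically for the right, and these coincide --- this is precisely where associativity of $\circ/\ast$ in the "defined on one side iff on the other" sense of (f) is needed, not just the equational part. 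For \textbf{(PE3)}, I would use divisibility (e): if $a\oplus b$ is defined, set $d := (a\oplus b)\ast((a\oplus b)\rightarrow a)$ or rather, following Lemma~\ref{le:subtract}, exhibit $b\diagdown(\cdot)$-type elements as $a^-\circ(a\oplus b)$ and $b^\sim\ast(a\oplus b)$ and check via (e) that they reassemble correctly; uniqueness then follows from (PE2) and associativity as in Theorem~\ref{th:PSA-PEA}. Finally, that the induced order agrees with $\le$: $a\le b$ in $P(R)$ means $b = c\oplus a$ for some $c$, i.e.\ $b = (a^\sim\circ c^\sim)^-$ hmm --- better, $b = (c^{\sim}\circ a^{\sim})^-$... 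I would instead argue $a\le_{\oplus} b \Leftrightarrow b = a\oplus(a\diagup b)$ with $a\diagup b$ supplied by divisibility whenever $a\le b$, giving one inclusion, and conversely $b = c\oplus a \Rightarrow b\ge a$ by the analogue of Lemma~\ref{le:1}(d) that $x\oplus y\ge x$ and $\ge y$ (itself a consequence of $\circ/\ast$ being decreasing in the relevant sense, Lemma~\ref{le:2.3}(vii)).
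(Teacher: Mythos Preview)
Your approach is essentially the paper's: (PE1) from associativity (f) together with the two forms of $\oplus$ supplied by (g); (PE2) by direct computation plus residuation for uniqueness; (PE4) from the domain condition; and (PE3) and the order agreement via divisibility and the observation $x,y\le x\oplus y$.

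One recurring slip must be fixed: pseudoinvolution (d) gives only $a^{-\sim}=a=a^{\sim-}$, \emph{not} $a^{\sim\sim}=a$ or $a^{--}=a$, which you invoke several times. In your (PE2) uniqueness argument, $a^\sim\circ b^\sim=0$ yields $b^\sim\le (a^\sim)^\sim=a^{\sim\sim}$, which is not $a$; the correct move (and the paper's) is to apply $^-$ rather than $^\sim$, obtaining $a^\sim=(a^{\sim})^{\sim-}=(a^{\sim\sim})^-\le (b^\sim)^-=b$. The same correction applies to your domain computations: $x^\sim\circ y^\sim$ is defined iff $(y^\sim)^-\le x^\sim$, i.e.\ $y^{\sim-}=y\le x^\sim$, and $y^-\ast x^-$ is defined iff $(x^-)^\sim\le y^-$, i.e.\ $x^{-\sim}=x\le y^-$; your written $y^{\sim\sim}$ and $x^{--}$ are wrong though your conclusions happen to be right. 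A minor difference in (PE3): the paper does not go through divisibility there but instead, from $c=a\oplus b$, uses (PE1) and (PE2) to read off $d=(a\oplus c^\sim)^-$ and $e=(c^-\oplus b)^\sim$ directly, checking definedness via $a,b\le c$.
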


\begin{proof} To prove (PE1), assume that $x\oplus y$ and $(x\oplus y)\oplus z$ exist. Then $x\oplus y=(x^{\sim}\circ y^{\sim})^-$, and $(x\oplus y)\oplus z=((x^{\sim}\circ y^{\sim})\circ z^{\sim})^-=(x^{\sim}\circ(y^{\sim}\circ z^{\sim}))^-=x\oplus(y\oplus z)$ by (f).

(PE2): From (d) we easily obtain $1^{\sim}=1^-=0$ and $0^{\sim}=0^-=1$. So $1=x\oplus y=(x^{\sim}\circ y^{\sim})^-$ iff $x^{\sim}\circ y^{\sim}=0$. $x\oplus a$ is defined iff $a\leq x^{\sim}$, and $x\oplus a=1$ iff $x^{\sim}\circ a^{\sim}\leq 0$, which by residuation holds iff $a^{\sim}\leq x^{\sim}\leadsto 0=x^{\sim \sim}$, whence $x^{\sim}\leq a$. It follows that the unique element $a$ such that $x\oplus a=1$ is $a=x^{\sim}$.

Similarly, $b\oplus x$ is defined iff $b\leq x^-$, and $b\oplus x=1$ means that $x^-\ast b^-\leq 0$, which by residuation holds iff $b^-\leq x^-\rightarrow 0=x^{- -}$, whence $x^-\leq b$. It follows that the unique element $b$ such that $b\oplus x=1$ is $b=x^-$.

(PE3): Let $a\oplus b=c$, then $1=c\oplus c^{\sim}=(a\oplus b)\oplus c^{\sim}=a\oplus(b\oplus c^{\sim})$, and $1=c^-\oplus c=c^-\oplus(a\oplus b)=(c^-\oplus a)\oplus b$ implies by (PE2) that $a\oplus b=c$ iff $a=(b\oplus c^{\sim})^-$, $b=(c^-\oplus a)^{\sim}$. So if $d\oplus a=c$, then $d=(a\oplus c^{\sim})^-$, and if $b\oplus e=c$, then $e=(c^-\oplus b)^{\sim}$. It remains to prove that such elements $d$ and $e$ are defined. In the same way as in Lemma \ref{le:2.3} (vii), we show that
$a\circ b\leq a$ and $a\ast b\leq a$. From $a\oplus b=(a^{\sim}\circ b^{\sim})^-$ we obtain that $a^{\sim}\circ b^{\sim}\leq a^{\sim}$, hence
$a\leq (a^{\sim}\circ b^{\sim})^-=a\oplus b=c$, and from $a\oplus b=(b^-\ast a^-)^{\sim}$ we get $b\leq a\oplus b=c$. It follows that the elements $d$ and $e$ are defined.

(PE4): $a\oplus 1$ is defined iff $a\leq 1^-=0$, hence $a=0$. Similarly, $1\oplus a$ is defined iff $1\leq a^-$, hence $a\leq 1^{\sim}=0$.

It remains to prove that $a\leq b$ in  $R$ iff $b=a\oplus c$, or $b=c\oplus a$ for some $c\in R$. We have already proved that $a\leq a\oplus c$ as well as $a\leq c\oplus a$. So assume that $a\leq b$. Then $b^-\leq a^-$, and divisibility implies $b^-=a^-\ast(a^-\rightarrow b^-)$, so that $b=(a^-\ast (a^-\rightarrow b^-))^{\sim}=(a^-\rightarrow b^-)^{\sim}\oplus a$. On the other hand, $a\leq b$ entails $b^{\sim}\leq a^{\sim}$, and by divisibility, $b^{\sim}=a^{\sim}\circ (a^{\sim}\leadsto b^{\sim})$, so that $b=(a^{\sim}\circ(a^{\sim}\leadsto b^{\sim}))^-=a\oplus(a^{\sim}\leadsto b^{\sim})^-$.
\end{proof}

\begin{theorem}
Let $(P;\oplus,0,1)$ be a pseudo-effect algebra. Define $x\circ y=y^-\diagup x$ for $y^-\leq x$ and $x\ast y=x\diagdown y^{\sim}$ for $y^{\sim}\leq x$. Furthermore define $x\rightarrow y=(x\circ y^{\sim})^-$ and $x\leadsto y=(x\ast y^-)^{\sim}$, both for $y\leq x$. Then $(P;\circ,\ast,\rightarrow,\leadsto,0,1)$ is a conditional double CI-poset satisfying (d)-(g).
\end{theorem}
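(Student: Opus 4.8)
The plan is to verify each of the axioms (a)--(c) and then each of the extra conditions (d)--(g) from Definition \ref{de:cdci}, using the subtraction identities collected in Lemma \ref{le:minus} together with the formula for $\oplus$ from Theorem \ref{th:plus}. Since this is essentially the converse of Theorem \ref{th:PEA-PSA} and Theorem \ref{th:CI-pea}, I expect most of the work to be a careful unwinding of definitions rather than a genuinely new idea.

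First I would check (a): by the definition of $\rightarrow$ and $\leadsto$ in the statement, these are declared defined precisely when $y\leq x$, so the first two clauses of (a) hold by fiat; and $x\circ y=y^-\diagup x$ requires $y^-\leq x$ while $x\ast y=x\diagdown y^{\sim}$ requires $y^{\sim}\leq x$, and since in a PEA we have $y\rightarrow 0=y^-$ and $y\leadsto 0=y^{\sim}$ (once $0\leq y$, which is automatic), these match the stated domain conditions. Next, (b) unity: $x\circ 1=1^-\diagup x=0\diagup x=x$ (since $0\diagup x=x$ comes from $x=0\oplus x$), $1\circ x=x^-\diagup 1=x$ because $x^-\oplus(x^-\diagup 1)=1$ forces $x^-\diagup 1=x^{\sim\, ?}$... here I would instead note $x^-\diagup 1 = (x^-)^{\sim}$ and use $(x^-)^{\sim}$-vs-involution carefully; more cleanly, $1\circ x=x^-\diagup 1$, and $x^-\oplus(x^-\diagup 1)=1$ with uniqueness of the right complement of $x^-$ gives $x^-\diagup 1=(x^-)^{\sim}$, and one checks $(x^-)^{\sim}=x$ from $x^-\oplus x=1$. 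The $\ast$-side is symmetric. For (c) residuation: given $x\circ y$ and $x\leadsto z$ both defined, I would show $x\circ y\leq z\iff y\leq x\leadsto z$ by translating both sides through Lemma \ref{le:minus} and the definition $x\leadsto z=(x\ast z^-)^{\sim}$, exactly mirroring the computation in the proof of axiom (3) of Theorem \ref{th:PEA-PSA}.

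For the extra conditions: (d) pseudoinvolution is immediate from the PEA identities $x^{-\sim}=x=x^{\sim-}$ and the order-reversal of $^-,^{\sim}$, which hold in every PEA. Condition (e) divisibility should follow from Lemma \ref{le:subtract} once one observes that with these definitions $y\circ(y\leadsto x)=x\wedge y$ and $y\ast(y\rightarrow x)=x\wedge y$ when $x\leq y$, reducing to $x\wedge y=x$; alternatively I would invoke Lemma \ref{le:divis}/\ref{le:prop} after noting the self-adjointness identity of type (3) holds here too. Condition (f) associativity transfers directly from (PE1) via the identity $x\oplus y=(x^{\sim}\circ y^{\sim})^-=(y^-\ast x^-)^{\sim}$ of Theorem \ref{th:plus}, exactly as in the proof of axiom (5) in Theorem \ref{th:PEA-PSA}: associativity of $\oplus$ forces associativity of $\circ$ and of $\ast$ on their domains of definition. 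Finally (g), the pseudo-effect algebra condition $(y^-\ast x^-)^{\sim}=(x^{\sim}\circ y^{\sim})^-$, is precisely the content of Theorem \ref{th:plus}.

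The main obstacle I anticipate is bookkeeping of \emph{domains}: because $\circ,\ast,\rightarrow,\leadsto$ are now only partially defined, every equation has to be accompanied by a check that both sides are simultaneously defined, and in the associativity axiom (f) one must argue the ``one side defined iff the other'' clause from the corresponding clause of (PE1) rather than just equating values. I would handle this by consistently rewriting every partial product as a subtraction via Definition \ref{de:circast}/\ref{de:oplus} and Lemma \ref{le:minus}, so that "defined" always translates to an explicit order relation ($y^-\leq x$, etc.), and then the domain compatibility becomes a routine inequality chase. No single step is deep; the care is entirely in the partiality.
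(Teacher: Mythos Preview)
Your proposal is correct and follows essentially the same route as the paper: verify (a)--(c) and then (d)--(g) by direct computation, with associativity inherited from (PE1) and the pseudo-effect algebra condition (g) coming from the two ways of writing $x\oplus y$.

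One stylistic difference is worth noting. The paper's first move is to rewrite \emph{all four} partial operations in terms of $\oplus$ via Lemma~\ref{le:minus}: $x\circ y=(x^-\oplus y^-)^{\sim}$, $x\ast y=(y^{\sim}\oplus x^{\sim})^-$, $x\rightarrow y=x^-\oplus y$, $x\leadsto y=y\oplus x^{\sim}$. After that, every subsequent verification (unity, residuation, divisibility, associativity, condition (g)) becomes a short manipulation of $\oplus$ and the complements, and the domain bookkeeping you worry about reduces uniformly to ``$\oplus$ is defined''. Your plan instead stays closer to the subtraction definitions and leans on results from Sections~3--4 (Theorem~\ref{th:plus}, Lemma~\ref{le:subtract}, the proof of axiom~(3) in Theorem~\ref{th:PEA-PSA}); be aware that those are stated for \emph{lattice} pseudo-effect algebras with the total Sasaki operations of Definition~\ref{de:circast}, whereas the present theorem concerns an arbitrary PEA with the \emph{partial} operations. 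The underlying identities do transfer (since when $y^-\leq x$ one has $a\wedge b^-=b^-$ in the relevant formulas), but you should not literally cite those results; rather, redo the short computations in the partial setting, or---more simply---adopt the paper's $\oplus$-rewriting at the start, which makes your anticipated domain-tracking in (f) and (c) almost automatic.
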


\begin{proof}
First we express binary relations $\circ$ and $\ast$ through $\oplus$: by Lemma \ref{le:minus} $\, x\circ y=y^-\diagup x=(x^-\oplus y^-)^{\sim}$ and $x\ast y=x\diagdown y^{\sim}=(y^{\sim}\oplus x^{\sim})^-$. Now we may also write $x\rightarrow y=x^-\oplus y$ and $x\leadsto y=y\oplus x^{\sim}$.
Next we show that if $b\leq c$ and $a\circ b$, $a\circ c$ exist, then $a\circ b\leq a\circ c$. Indeed, $b\leq c\,\Rightarrow\, c^-\leq b^-\,\Rightarrow\, a^-\oplus c^-\leq a^-\oplus b^-\,\Rightarrow\, (a^-\oplus b^-)^{\sim}\leq (a^-\oplus c^-)^{\sim}$. Similarly this monotone property holds for $\ast$. \\
(b): $x\circ 1=(x^-\oplus 1^-)^{\sim}=x^{-\sim}=x$; $1\circ x=(1^-\oplus x^-)^{\sim}=x$; $x\ast 1=(1^{\sim}\oplus x^{\sim})^-=x$ and $1\ast x=(x^{\sim}\oplus 1^{\sim})^-=x$.\\
(c): Let $x\circ y\leq z$. Then $(x^-\oplus y^-)^{\sim}\leq z\,\Rightarrow\, z^-\leq x^-\oplus y^-$ and $x\ast z^-\leq x\ast (x^-\oplus y^-)=((x^-\oplus y^-)^{\sim}\oplus x^{\sim})^-=(y^-\diagup x\oplus x^{\sim})^-$. But we know that $y^-\oplus y^-\diagup x\oplus x^{\sim}=1$, thus $y^-\diagup x\oplus x^{\sim}=y$ and $(y^-\diagup x\oplus x^{\sim})^-=y^-$. Thus we have $(z\oplus x^{\sim})^-=x\ast z^-\leq y^-$ and therefore $y\leq z\oplus x^{\sim}=x\leadsto z$.\\
Conversely, let $y\leq x\leadsto z=(x\ast z^-)^{\sim}=(x\diagdown z)^{\sim}$. Then $x\circ y\leq x\circ (x\diagdown z)^{\sim}=(x\diagdown z)\diagup x=z$. The second statement can be proved similarly.\\
(d) Here the only thing to prove is that $x\rightarrow 0=x^-$ and $x\leadsto 0=x^{\sim}$. But $x\rightarrow 0=x^-\oplus 0=x^-$ and $x\leadsto 0=0\oplus x^{\sim}=x^{\sim}$.\\
(f) Associativity of $\circ$ and $\ast$ is implied by associativity of $\oplus$.\\
To prove (g) it is enough to express $x\oplus y$ in terms of conditional double CI-poset operations and we obtain $(x^{\sim}\circ y^{\sim})^-=x\oplus y=(y^-\ast x^-)^{\sim}$.\\
(e): Let $x\leq y$. Compute: $y\circ (y\leadsto x)=y\circ (x\oplus y^{\sim})=(y^-\oplus (x\oplus y^{\sim})^-)^{\sim}$. Now $(x\oplus y^{\sim})^-\oplus x\oplus y^{\sim}=1$, thus $(x\oplus y^{\sim})^-\oplus x=y$ and so finally by Lemma \ref{le:minus} $ x=(y^-\oplus (x\oplus y^{\sim})^-)^{\sim}$. Similarly $y\ast (y\rightarrow x)=y\ast (y^-\oplus x)=((y^-\oplus x)^{\sim}\oplus y^{\sim})^-$ and while $y^-\oplus x\oplus (y^-\oplus x)^{\sim}=1$, we get $y=x\oplus (y^-\oplus x)^{\sim}$, thus $x=(y^-\oplus x)^{\sim}\oplus y^{\sim})^-$.
\end{proof}

\begin{remark} In \cite{ChaHa},  pseudo-effect algebras  satisfying the additional identity
$(x^-\oplus y^-)^{\sim}=(x^{\sim}\oplus y^{\sim})^-$   are called \emph{good pseudo-effect algebras}, and it was shown that they can be characterized by means of so-called  conditionally residuated structure $({\mathcal R}=(R;\leq, .,\rightarrow,\leadsto,0,1)$, which is a bounded poset ($0$ is the least and $1$ is the greatest element) with three binary operations $., \rightarrow,\leadsto$, where $.$ and $\rightarrow$ and $.$ and $\leadsto$ are related by residuation.

In terms of conditional double CI-posets, good pseudo-effect algebras can be characterized by the additional identity
$x\circ y=y\ast x$, whenever $y^-\leq x$.

\end{remark}

\end{document}